\documentclass[reqno]{amsart}
\usepackage{amssymb,epsfig,graphics,graphicx,bbm,hyperref,color}
\usepackage{xfrac,mathtools,MnSymbol}

\usepackage{multicol}
\usepackage{enumitem}


\definecolor{gray}{rgb}{0.93,0.93,0.93}
\definecolor{light-gold}{rgb}{0.99,0.97,0.78}

\setlength{\oddsidemargin}{15mm}
\setlength{\evensidemargin}{15mm}
\setlength{\textwidth}{142mm}
\setlength{\textheight}{210mm}

\def\be{\begin{equation}}
\def\ee{\end{equation}}
\def\bm{\begin{multline}}
\def\eem{\end{multline}}
\def\bfig{\begin{figure}[htb]}
\def\efig{\end{figure}}

\newcommand{\e}[1]{\,{\rm e}^{#1}\,}

\numberwithin{equation}{section}
\newtheorem{theorem}{Theorem}[section]
\newtheorem{proposition}[theorem]{Proposition}
\newtheorem{lemma}[theorem]{Lemma}

\newcommand{\bbE}{{\mathbb E}}

\newcommand{\bbP}{{\mathbb P}}

\newcommand{\bbZ}{{\mathbb Z}}

\newcommand{\eps}{{\varepsilon}}

\newcommand{\EE}{\mathbb{E}}

\newcommand{\PP}{\mathbb{P}}

\newcommand{\RR}{\mathbb{R}}

\newcommand{\cL}{\mathcal{L}}
\newcommand{\cE}{\mathcal{E}}
\newcommand{\cM}{\mathcal{M}}
\newcommand{\cS}{\mathcal{S}}
\renewcommand{\a}{\alpha}
\renewcommand{\b}{\beta}
 
\renewcommand{\d}{\delta}

\renewcommand{\L}{\Lambda}
\newcommand{\om}{\omega}
\newcommand{\Om}{\Omega}
 
\newcommand{\s}{\sigma}

\newcommand{\es}{\varnothing}
\newcommand{\se}{\subseteq}
\newcommand{\sm}{\setminus}
\newcommand{\lra}{\leftrightarrow}
\newcommand{\crit}{\mathrm{c}}
\newcommand{\oo}{\infty}
\newcommand{\one}{\hbox{\rm 1\kern-.27em I}}

\newcommand{\fa}{\mathfrak{a}}
\newcommand{\fb}{\mathfrak{b}}

\makeatletter
  \def\tagform@#1{\maketag@@@{\footnotesize{(#1)}\@@italiccorr}}
\makeatother

\renewcommand{\eqref}[1]{(\ref{#1})}

\begin{document}


\title{Critical parameter of random loop model on trees}

\author{Jakob E. Bj\"ornberg}
\address{Department of Mathematics,
University of Gothenburg, 
Sweden}
\email{jakob.bjornberg@gmail.com}

\author{Daniel Ueltschi}
\address{Department of Mathematics, University of Warwick,
Coventry, CV4 7AL, United Kingdom}
\email{daniel@ueltschi.org}

\subjclass{60K35, 82B20, 82B26, 82B31}

\keywords{Random loop model; quantum Heisenberg}

\begin{abstract} 
We give estimates of the critical parameter for
random loop models that are related to quantum spin systems.
A special case of the model that we consider is the interchange- or
random-stirring process.
We
consider here the model defined on regular trees of large degrees,
which are expected to approximate high
spatial dimensions. We find a critical parameter that indeed shares
similarity with existing numerical results for the cubic lattice.
In the case of the interchange process our results improve on earlier
work by Angel and by Hammond, in that we determine the second-order
term of the critical parameter.
\end{abstract}

\thanks{\copyright{} 2017 by the authors. This paper may be reproduced, in its
entirety, for non-commercial purposes.}

\maketitle

\section{Introduction}
We consider random loop models that are motivated by quantum spin
systems. A special case is the random interchange model that was first
introduced by Harris \cite{Har}. T\'oth showed that a variant of this
model, where permutations receive the weight $2^{\rm \#\, cycles}$, is
closely related to the quantum Heisenberg
ferromagnet \cite{Toth}. Another loop model was introduced by Aizenman
and Nachtergaele to describe the quantum Heisenberg
antiferromagnet \cite{AN}. These loop models were combined in order to
describe a family of quantum systems that interpolate between the two
Heisenberg models, and which contains the quantum XY model \cite{Uel}.

Let $G = (V,E)$ be an arbitrary finite graph with vertex set $V$ and
edge set $E$, and $\beta>0$, $u \in [0,1]$ be two parameters. To each
edge $e \in E$ is assigned a time interval $[0,\beta]$, and an
independent Poisson point process with two kinds of outcomes:
``crosses" occur with intensity $u$ and ``double bars" occur with
intensity $1-u$. We let $\Omega(G)$ denote the set of realizations of
the combined Poisson point process on $E \times [0,\beta]$.

Given a realization $\omega \in \Omega(G)$, we consider the loop
passing through a point $(x,t) \in V \times [0,\beta]$ that is defined
in a natural way, as follows
(see Fig.\ \ref{fig loops}). The loop is a closed
trajectory with support on $V \times [0,\beta]_{\rm per}$ where
$[0,\beta]_{\rm per}$ is the interval $[0,\beta]$ with periodic
boundary conditions, i.e., the torus of length $\beta$. Starting at
$(x,t)$, move ``up" until meeting the first cross or double bar with
endpoint $x$; then jump onto the other endpoint, and continue in the
same direction if a cross, in the opposite direction if a double bar;
repeat until the trajectory returns to $(x,t)$.

\begin{centering}
\bfig
\begin{picture}(0,0)%
\epsfig{file=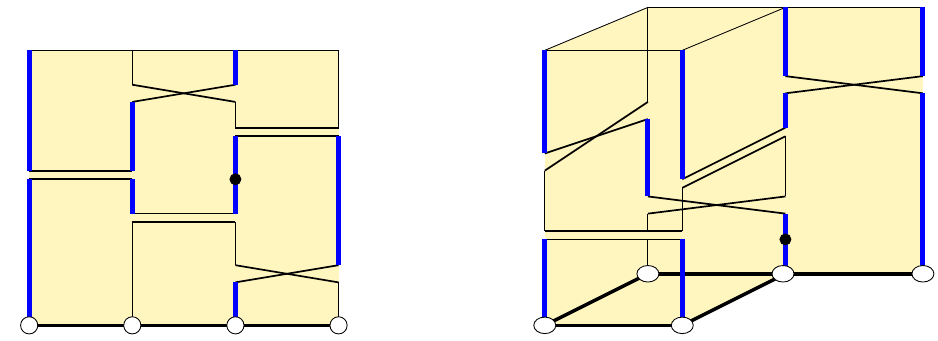}%
\end{picture}%
\setlength{\unitlength}{2171sp}%
\begingroup\makeatletter\ifx\SetFigFont\undefined%
\gdef\SetFigFont#1#2#3#4#5{%
  \reset@font\fontsize{#1}{#2pt}%
  \fontfamily{#3}\fontseries{#4}\fontshape{#5}%
  \selectfont}%
\fi\endgroup%
\begin{picture}(8159,2948)(1246,-4090)
\put(2671,-2761){\makebox(0,0)[lb]{\smash{{\SetFigFont{7}{8.4}{\rmdefault}{\mddefault}{\updefault}{\color[rgb]{0,0,0}$(x,t)$}%
}}}}
\put(7876,-3961){\makebox(0,0)[lb]{\smash{{\SetFigFont{8}{9.6}{\rmdefault}{\mddefault}{\updefault}{\color[rgb]{0,0,0}$G$}%
}}}}
\put(1276,-1636){\makebox(0,0)[lb]{\smash{{\SetFigFont{8}{9.6}{\rmdefault}{\mddefault}{\updefault}{\color[rgb]{0,0,0}$\beta$}%
}}}}
\put(5776,-1636){\makebox(0,0)[lb]{\smash{{\SetFigFont{8}{9.6}{\rmdefault}{\mddefault}{\updefault}{\color[rgb]{0,0,0}$\beta$}%
}}}}
\put(1246,-4036){\makebox(0,0)[lb]{\smash{{\SetFigFont{8}{9.6}{\rmdefault}{\mddefault}{\updefault}{\color[rgb]{0,0,0}$0$}%
}}}}
\put(5731,-4036){\makebox(0,0)[lb]{\smash{{\SetFigFont{8}{9.6}{\rmdefault}{\mddefault}{\updefault}{\color[rgb]{0,0,0}$0$}%
}}}}
\put(4351,-4036){\makebox(0,0)[lb]{\smash{{\SetFigFont{8}{9.6}{\rmdefault}{\mddefault}{\updefault}{\color[rgb]{0,0,0}$G$}%
}}}}
\put(8176,-3286){\makebox(0,0)[lb]{\smash{{\SetFigFont{7}{8.4}{\rmdefault}{\mddefault}{\updefault}{\color[rgb]{0,0,0}$(x,t)$}%
}}}}
\end{picture}%
\caption{Graphs and realizations of Poisson point processes, and the loop that contains $(x,t)$.}
\label{fig loops}
\efig
\end{centering}

In order to represent a quantum model, one should add the weight
$\theta^{\rm \#\, loops}$ with $\theta = 2, 3, 4, \dots$; quantum
correlations are then given in terms of loop correlations, and
magnetic long-range order is equivalent to the presence of macroscopic
loops. Notice that the parameter $\beta$ plays the r\^ole of the
inverse temperature of the quantum spin system, hence the notation.

The random interchange model (i.e.\ the case $u=1$ and $\theta=1$) has
been the object of several studies when the graph is a tree
\cite{Ang,Ham1, Ham2},
the complete graph \cite{Sch, Ber, BK}, the
hypercube \cite{KMU}, and the Hamming graph \cite{MS};
a result for general graphs was also proved
in \cite{AK}. In the case of arbitrary $\theta\in\{2,3,\dotsc\}$, 
and on the complete
graph, the critical parameter has also been determined
\cite{Bjo,Bjo2}. 
Another generalization of
the random interchange model is Mallows permutations, studied
in \cite{GP,Sta}.

The occurrence of macroscopic loops can be proved using the method of
reflection positivity and infrared bounds in the case where $u \in
[0,\frac12]$, $\theta=2,3,...$, and a cubic lattice of sufficiently
high dimensions (depending on $\theta$); see \cite{Uel} for precise
statements.

In the case where the graph is a three-dimensional cubic lattice with
edges between nearest-neighbors, and with $\theta=1$, the critical
parameter $\beta_{\rm c}(u)$ has been calculated numerically
in \cite{BBBU}. The result is depicted in Fig.\ \ref{fig betacrit} and
shows a convex curve where $\beta_{\rm c}(0)$ is slightly smaller than
$\beta_{\rm c}(1)$ and which has a minimum at or around $u=\frac12$. This behavior is expected to hold for all dimensions $d \geq 3$.

\begin{centering}
\bfig
\includegraphics[width=54mm]{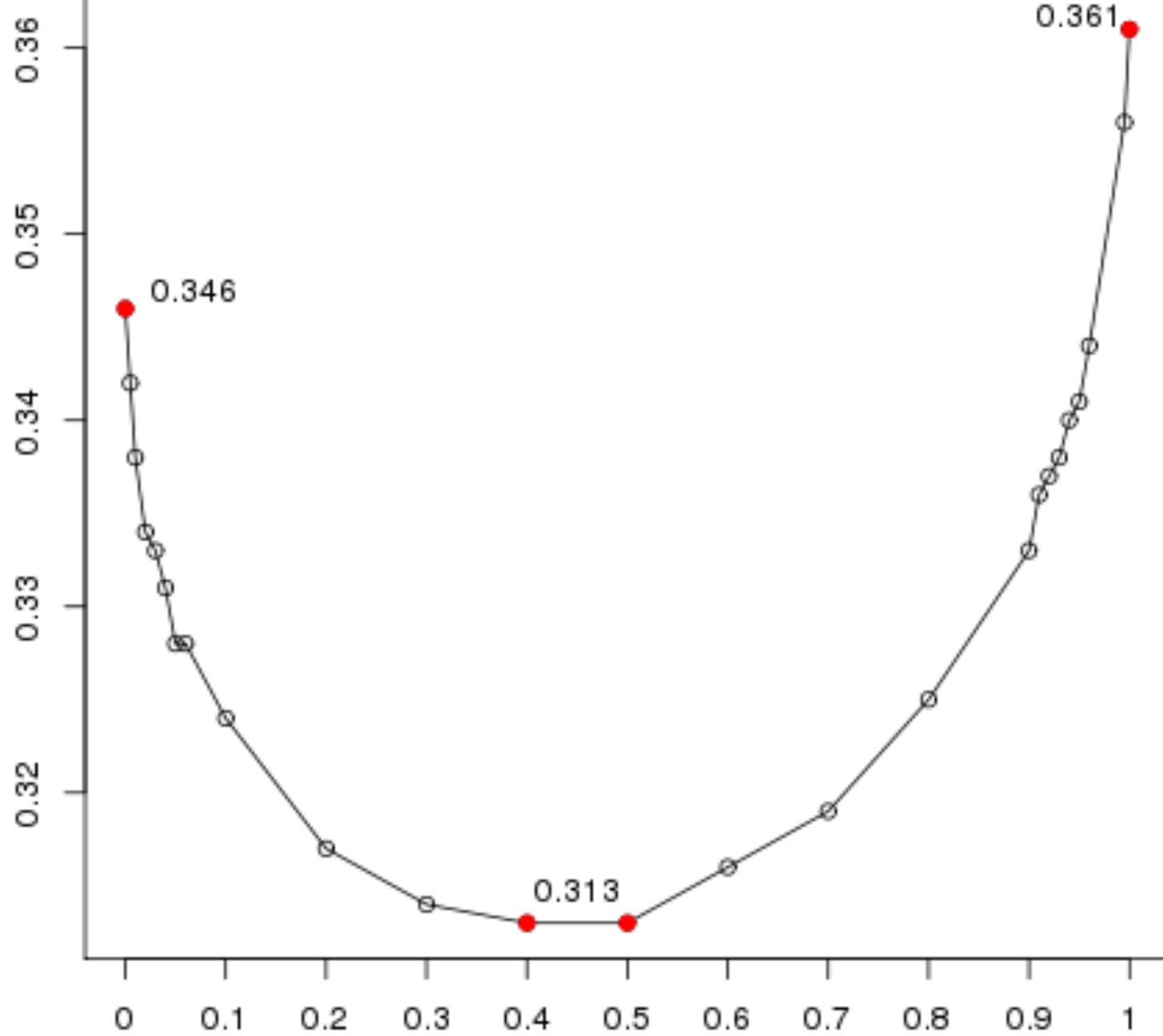}%
\qquad\qquad
\raisebox{8mm}{\includegraphics[width=55mm]{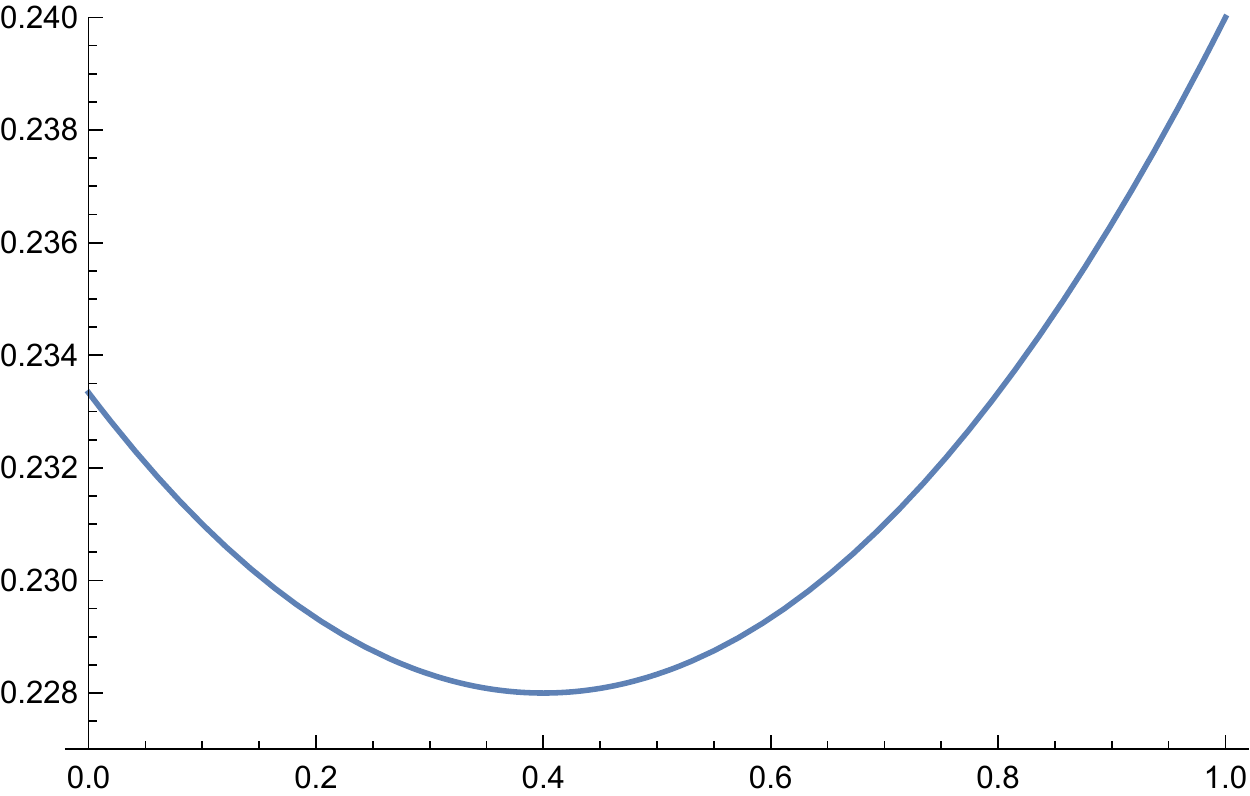}}
\caption{The critical parameter $\beta_{\rm c}$ as function of $u$: Left, on the three-dimensional cubic lattice (numerical results from \cite{BBBU}); right, Eq.\ \eqref{beta crit} with $d=5$.}
\label{fig betacrit}
\efig
\end{centering}

Trees are expected to approximate high dimensions. We consider here
infinite regular trees with offspring degree $d$. 
Loops are almost-surely well-defined in the same way as previously
(since vertices have uniformly bounded degrees) but now some loops may
be unbounded.
We prove that a transition takes
place at the critical parameter $\beta_{\rm c} = \beta_{\rm c}(u,d)$
given by
\be
\label{beta crit}
\beta_{\rm c}(u,d) = \frac1d + \frac{1 - u(1-u)-\tfrac16(1-u)^2}{d^2} 
+ o(d^{-2}).
\ee
The second graph of Fig.\ \ref{fig betacrit} shows $\beta_{\rm c}$ as function of $u$ (with $d=5$). The leading order of the critical parameter, $\frac1d$, is also the leading order for the percolation threshold in the associated percolation model where an edge is open
if at least one cross or double bar is present in the corresponding
interval $[0,\beta]$. The next order for $\beta_{\rm c}$ is a non-trivial function of
$u$ and it is smallest for $u=\frac25$. This function can be understood by looking at edges with two links and at loop connections with two crosses, two double bars, or one each. As is explained below (see Fig.\ \ref{fig two-connection}), loop connections are better in the latter
case, with a cross and a double bar.\footnote{Alan Hammond pointed out to us this important
observation.}

Let $E_\infty$ denote the event where the root of the tree (at time 0) 
belongs to an infinite loop.

\begin{theorem}
\label{thm beta crit}
Let $A>0$ be arbitrary and $\beta = \frac1d + \frac\alpha{d^2}$ 
with $\alpha \leq A$. 
There exists $d_0$ (that may depend on $A$ but 
not on $\alpha$) such that for all $d \geq d_0$, 
there exists $\alpha_\crit(u,d)$ such that
\[
\bbP_{\beta,d,u}(E_\infty) \begin{cases} = 0 & \text{if } \alpha < \alpha_{\rm c}(u,d), \\ >0 & \text{if } \alpha > \alpha_{\rm c}(u,d). \end{cases}
\]
Further, we have 
$\alpha_\crit(u,d) = 1 - u(1-u)-\frac16(1-u)^2 +o(1)$
as $d\to\oo$, uniformly in $u$.
\end{theorem}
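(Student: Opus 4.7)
The plan is to represent the loop through $(\rho,0)$ by a (multi-type) Galton--Watson process on the tree, whose survival corresponds to the event $E_\infty$. The key structural input is that the Poisson link-processes on disjoint edges are independent and the tree is acyclic, so the excursions of the loop into the subtrees rooted at the children of any visited vertex are conditionally independent given the link data on the edge from that vertex to its parent. Let $V^\ast\subset V$ be the random connected subtree consisting of all vertices visited by the loop; encoding each $v\in V^\ast$ by a \emph{type} recording the number and types (cross/double bar) of links on $\{p(v),v\}$ turns $v\mapsto(\text{offspring of }v\text{ in }V^\ast,\text{ with types})$ into a multi-type Galton--Watson process. Standard Perron--Frobenius theory then yields $\bbP_{\beta,d,u}(E_\infty)>0$ iff the leading eigenvalue $\mu(\beta,u,d)$ of the mean-offspring matrix exceeds $1$; defining $\alpha_\crit(u,d)$ by $\mu=1$ along the line $\beta=1/d+\alpha/d^2$ produces the threshold asserted in the theorem.

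Next I would expand $\mu$ in powers of $1/d$. At leading order only one-link edges matter: a child $c$ is reached whenever $\{\rho,c\}$ carries at least one link, which has probability $\beta(1+o(\beta))$, and summing over the $d$ children gives $\mu=d\beta+O(1/d)=1+\alpha/d+O(1/d)$. The $O(1/d)$ correction is the heart of the matter and decomposes into contributions from two-link configurations on the parent edge. Each of the three sub-cases---two crosses (CC), two double bars (DD), and one of each (CD)---interacts differently with the child's time-circle: as described in the introduction and Fig.\ \ref{fig two-connection}, a mixed pair CD causes the loop to traverse \emph{both} arcs of the child's time-circle and hence potentially reach many more grand-children, whereas a same-type pair visits only one arc. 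Weighting the three sub-cases by their Poisson/type probabilities $\beta^2u^2/2$, $\beta^2(1-u)^2/2$, $\beta^2u(1-u)$ and summing over the $d$ outgoing edges produces the term $-u(1-u)$ in $\alpha_\crit$. The additional $-(1-u)^2/6$ term arises from a refined combinatorial analysis of configurations involving (pairs of) double bars, in which the factor $1/6$ is traced to an ordered three-point integral $\int_0^\beta\int_0^{t_2}\int_0^{t_1} dt_3\,dt_1\,dt_2=\beta^3/6$. Configurations with three or more links on a single edge contribute $O(1/d^3)$ and are negligible. Solving $\mu(\beta,u,d)=1$ at this order yields $\alpha_\crit(u,d)=1-u(1-u)-\tfrac{1}{6}(1-u)^2+o(1)$, uniformly in $u$.

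The main obstacle will be to rigorously set up the multi-type branching representation and identify the minimal set of types needed to capture the $1/d^2$ correction to $\beta_\crit$. In particular, one must show that whether a given link is actually \emph{used} by the loop through $(\rho,0)$---in principle a global function of the entire tree, since it depends on how the loop's arcs are glued together far away---agrees, up to errors contributing only $o(1/d)$ to $\mu$, with a local function of the link data in a bounded neighbourhood of the link. A related subtle point is that the loop's visits to different children of a vertex $v$ are not literally independent but become so after integrating over the links on $v$'s other edges, and one must verify that this integration does not spoil the $O(1/d^2)$ precision. Once these local/finite reductions and the truncation to at-most-two-link edges are justified, the computation of $\mu$ becomes a finite collection of explicit Poisson integrals, and solving $\mu=1$ produces the announced value of $\alpha_\crit$ with the required uniformity in $u$.
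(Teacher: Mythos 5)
Your proposal takes a genuinely different route from the paper. You propose to encode the loop exploration as a multi-type Galton--Watson process and read off the critical parameter from the leading eigenvalue of the mean-offspring matrix. The paper explicitly moves away from this branching-process-comparison strategy (which is closer to Angel's original argument) and instead proves direct \emph{recursion inequalities} for the survival probability $\sigma_m$ to generation $m$: a lower-bound recursion (Proposition~\ref{large-prop}) for the supercritical regime and an upper-bound recursion (Proposition~\ref{small-prop}) for the subcritical regime, both obtained by conditioning on simple events $A_1$, $A_2$ describing the link configuration near the root. The advantage of the paper's approach is that it never needs to set up or analyze a branching process with a complicated type space: the monolink observation (Proposition~\ref{monolink-prop}) reduces everything to local Poisson computations, and the error terms are controlled explicitly.

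Your sketch has several unresolved gaps. First, the Galton--Watson representation is not exact as you state it: the ``type'' of a vertex $v$ cannot simply record the number and nature (cross/double-bar) of links on the parent edge, because the accessible portion of the time circle at $v$ depends on the \emph{positions} of those links (compare Fig.~\ref{fig two-connection}). This makes the type space continuous, and the ``survival iff leading eigenvalue $>1$'' criterion from finite-type Perron--Frobenius theory is not automatic; you do not explain how you would justify it or why the truncation to at-most-two-link edges preserves the criticality threshold to the required order. Second, your attribution of the $\tfrac16(1-u)^2$ term to a ``three-point integral $\beta^3/6$'' is incorrect: in the paper that constant comes from $\EE[X]=\tfrac23$ where $X=1-|U_1-U_2|$ is the length of the accessible arc given two link times, and $\EE|U_1-U_2|=\tfrac13$ is a two-fold integral. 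Third, and most significantly, your proposal does not address the sharpness of the transition, i.e.\ the existence of a single $\alpha_\crit$ with $\PP(E_\infty)=0$ below and $\PP(E_\infty)>0$ above; the estimates you sketch would only give a transition up to a $\pm\delta$ window. The paper handles this separately in Proposition~\ref{sharp-prop} by adapting Hammond's differential-inequality/pivotality argument to show that $\beta\mapsto\sigma(\beta)$ is monotone on the relevant interval; your approach would have to either establish an exact GW representation (which is what would give monotonicity of $\mu$ for free) or supply an equivalent sharpness argument.
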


This theorem follows from Propositions \ref{betacrit-prop} and \ref{sharp-prop}. Proposition \ref{betacrit-prop} establishes the existence of $d_0(\alpha)$ such that loops occur for $\alpha>\alpha_{\rm c}$ but not for $\alpha<\alpha_{\rm c}$, if $d > d_0(\alpha)$. The case $u=1$, that is, the interchange model on trees, was treated up to first order in $d^{-1}$ by Hammond \cite{Ham1}, following the work of Angel \cite{Ang}.
Proposition \ref{sharp-prop} implies that $d_0(\alpha)$ is uniform on
bounded intervals. The corresponding result for the interchange model
($u=1$) was proved by Hammond \cite{Ham2}. 
It turns out that his method can be adapted to $u\neq1$ with minor modifications, as explained in Section \ref{sec sharp}.

The reason we require $\alpha$ to be bounded by $A$ is that our
arguments only apply for $\beta$ close to $d^{-1}$;  hence $d_0$ depends
on $A$.  Presumably there is some $A_0>0$ such that $\PP(E_\oo)>0$
whenever $\alpha>A_0$.  For the interchange model this was also proved
by Hammond \cite{Ham1}, but his arguments use in a crucial way a
comparison with random walk, which fails for $u<1$ due to the
`time-reversal' which occurs when a double bar is traversed.  (For
$\alpha<0$ we have $\PP(E_\oo)=0$ for all $d$ by standard comparison
with percolation.)

Of the two previous methods for proving the occurrence of infinite
loops in the interchange model, due to Angel \cite{Ang} and Hammond
\cite{Ham1} respectively, our argument is thus closer to that of
Angel, which also 
requires $\beta$ to be close to $d^{-1}$.  However, where Angel uses a
comparison with a branching process, we instead directly prove
recursion inequalities for the probability of long loops.  These
inequalities include error terms which are of higher order in $d^{-1}$ and
may be made negligible by taking $d$ large.

The case $\theta\neq1$ could probably be treated in a similar way,
although a full study is needed in order to rule out extra
obstacles. A major open problem is to establish
that, in the case where the graph is a box in $\bbZ^{d'}$ with
nearest-neighbor edges, the critical parameter satisfies
Eq.\ \eqref{beta crit} with $d = 2d'-1$.

\section{The critical parameter}

As mentioned above, we consider an infinite rooted regular tree with
offspring degree $d$.  To
each edge is associated the interval $[0,1]$, and an independent
Poisson point process where ``crosses" occur with intensity 
$u\beta \in [0,\beta]$, and ``double bars" occur with 
intensity $(1-u)\beta$. (This is a variation of the model discussed above, with $\beta$ affecting the intensities rather than the time interval, which is obviously equivalent.)

Let us define $\bar\alpha(u) = 1 - u(1-u)-\frac16(1-u)^2$.
In what follows we always take $0\leq\alpha\leq A$ for some arbitrary
but fixed $A>0$, and error terms may depend on $A$.

\begin{proposition}\label{betacrit-prop}
Let $\b=\tfrac1d+\frac\alpha{d^2}$ and $\d>0$.
There exists $d_0(\d)$ such that the following hold for all $d>d_0$.
\begin{enumerate}[leftmargin=*]
\item[(a)] For every $\a\leq\bar\alpha-\d$, 
we have \[\bbP_{\beta,d,u}((\rho,0) \leftrightarrow \infty) = 0.\]
\item[(b)] For every $\a\geq\bar\alpha+\d$, we 
have \[\bbP_{\beta,d,u}((\rho,0) \leftrightarrow \infty)  > 0.\]
\end{enumerate}
\end{proposition}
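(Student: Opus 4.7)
My strategy is to reformulate the existence of an infinite loop as the non-extinction of a branching-type exploration on the tree, and to compute its effective mean offspring to second order in $d^{-1}$. For each integer $n \geq 0$, let $q_n$ denote the probability that the loop through $(\rho,0)$ visits some vertex at graph-distance $n$ from $\rho$. Since distinct subtrees rooted at children of $\rho$ carry independent Poisson processes, and since the set of vertices visited by the loop automatically forms a connected subtree of the underlying regular tree, I would derive matching upper and lower recursions
\[
F_{-}(q_{n-1},\ldots,q_{n-k}) \;\leq\; q_n \;\leq\; F_{+}(q_{n-1},\ldots,q_{n-k})
\]
for a small fixed integer $k$, obtained by conditioning on the link configuration along the $d$ edges incident to $\rho$ and on whether the loop returns to $\rho$'s time axis after entering a child subtree.

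\textbf{Main computation.} Since $\bbP(\text{an edge carries at least 3 links}) = O(\beta^3) = O(d^{-3})$, only edges carrying $0$, $1$ or $2$ links contribute to order $d^{-2}$. Single-link edges yield the leading term $d\beta \approx 1 + \alpha/d$ in the effective mean offspring. The second-order correction comes from the three types of double-link edge --- (cross, cross), (bar, bar), and mixed --- arising with conditional probabilities $u^2$, $(1-u)^2$ and $2u(1-u)$, respectively. A direct topological analysis of the loop segment induced by each configuration, combined with the observation from the introduction that mixed pairs generate better connections than pairs of like type, is what should produce the coefficients $-u(1-u)$ and $-\tfrac{1}{6}(1-u)^2$ in $\bar\alpha(u)$. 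The standing assumption $0 \leq \alpha \leq A$ ensures that error terms of the form $O(\alpha^j d^{-3})$ arising from edges with $\geq 3$ links and from higher-order interactions can be absorbed uniformly into a single threshold $d_0(\delta)$.

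\textbf{Closing the argument and main obstacle.} For part (a), once the upper recursion takes the form $q_n \leq (1 + (\alpha - \bar\alpha + o(1))/d)\, q_{n-1} + O(d^{-3})$, iteration yields $q_n \to 0$ whenever $\alpha \leq \bar\alpha - \delta$ and $d \geq d_0(\delta)$; monotone convergence then gives $\bbP_{\beta,d,u}((\rho,0) \leftrightarrow \infty) = 0$. For part (b), the lower recursion is handled by comparison with a near-critical but supercritical Galton--Watson tree of offspring mean $1 + c\delta/d$; because we work so close to criticality, the survival probability itself is of order $\delta/d$, so the $O(d^{-3})$ error terms must be propagated through the iteration with care. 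The principal technical difficulty is the double-link analysis: since the loop is a single connected object, a double-link edge is not equivalent to two independent single-link edges, and depending on whether the two links are (cross, cross), (bar, bar) or mixed, the loop enters, exits, and possibly re-enters the child subtree in qualitatively different patterns. Moreover, because double bars reverse the time direction of the loop, the argument cannot reduce to the random-walk comparison used by Hammond at $u=1$ and must remain a genuinely branching-type recursion throughout.
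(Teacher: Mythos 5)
Your overall plan is on the right track — defining the survival probability $\sigma_m$, computing the recursion to second order in $d^{-1}$ by conditioning on the link configuration incident to $\rho$, and isolating the three double-link types (with the key observation that mixed cross/double-bar pairs yield better connections) is precisely the engine of the paper's argument. But there is a genuine gap in how you propose to close part (a).

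The upper recursion you write, $q_n \leq (1+ (\alpha-\bar\alpha+o(1))/d)\,q_{n-1} + O(d^{-3})$, does \emph{not} iterate down to zero: the fixed point of that recursion is of order $d^{-2}$ (balance $q^*\cdot c/d$ against the additive $O(d^{-3})$), and in fact if you simply discard root-edges carrying $\geq 3$ links the additive error is even worse, of order $d^{-2}$, since there are $d$ such potential edges each contributing $O(d^{-3})$. The difficulty is precisely that the loop \emph{can} escape through $\geq 3$-link edges, and with probability of order $1$ given that it is not blocked elsewhere, so you cannot package those events into a flat additive error — you must show the probability of survival \emph{given} a $\geq 3$-link chain of length $\ell$ is controlled by $\sigma_{m-\ell}$. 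The paper does this by conditioning on the ``complex component'' subtree $\check T$ spanned (roughly) by $\geq 3$-link edges, bounding its boundary $|\cE_\ell|$ via a subcritical Galton--Watson estimate, and combining with a blocking lemma (Lemma~\ref{2-block-lem}) to obtain a purely multiplicative bound of the form $\sigma_m \leq \check\sigma_{m-1}\bigl(1 + \tfrac{\alpha-\bar\alpha}{d}+O(d^{-2})\bigr)$, where $\check\sigma_{m-1}=\sum_{\ell\geq 3}(dq)^{\ell-3}\sigma_{m-\ell}$. That multiplicative structure, with no additive error, is what yields exponential decay $\sigma_m\leq C\sigma^m$. The paper even flags this asymmetry: absence of long loops is the harder direction precisely because the percolation tree is already supercritical for $\alpha>\tfrac12$.

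For part (b), your idea of comparing with a supercritical Galton--Watson process is essentially Angel's route, which the paper deliberately avoids; instead they prove a direct lower recursion $\sigma_m\geq \tilde\sigma_{m-1}+\tfrac{\tilde\sigma_{m-1}}{d}(\alpha-\bar\alpha)-\tfrac12\tilde\sigma_{m-1}^2+O(d^{-3})$ with $\tilde\sigma_{m-1}=\sigma_{m-1}\wedge(\eps/d)$, and show by induction that $\sigma_m\geq\eps/d$ for all $m$. Here the additive $O(d^{-3})$ \emph{is} harmless, because the target is a lower bound of order $d^{-1}$, not convergence to $0$; your instinct that ``error terms must be propagated with care'' is right, and either route could in principle work, but the paper's capping trick with $\tilde\sigma$ is a cleaner way to neutralise the quadratic term. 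In short: your part (b) differs stylistically but is salvageable; your part (a) as stated would only prove $q_n\to O(d^{-2})$ and needs the multi-generation recursion through $\check\sigma_{m-1}$ and a careful treatment of $\geq3$-link clusters to actually conclude $\PP((\rho,0)\lra\oo)=0$.
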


Note that we prove exponential decay for (a), that is, the loop containing $(\rho,0)$ has diameter $m$ with probability less than $C \e{-\eta m}$.
These claims can be compared to the numerical results for
three-dimensional lattices.  Also, the special case 
$u=1$ of our result gives a
solution to Problem~10 of~\cite{Ang} (for large enough $d$).

\subsection{Preliminaries}

We let $T$ denote an infinite tree where each vertex has $d\geq2$
offspring, and write $\rho$ for its root.  
For 
$m\geq 0$ let $T^{(m)}$ denote the subtree of $T$ consisting of
the first $m$ generations.

We write $\s_m$ for the probability that $(\rho,0)$ belongs to a
loop which reaches generation $m$ in $T^{(m)}$, and $\zeta_m=1-\s_m$. 
Note that $\s_m\leq\s_{m-1}$
and that $\s_m\to\PP((\rho,0)\lra\oo)$ as $m\to\oo$.
We write $B^m_{(\rho,0)}$ for the event that $(\rho,0)$ 
does not belong to a
loop which reaches generation $m$ in $T^{(m)}$, so that
$\PP(B^m_{(\rho,0)})=\zeta_m$.  Thus $B^m_{(\rho,0)}$ is the event
that the loop of $(\rho,0)$ is `blocked' from generation $m$, and
$\s_m$ is the probability that it `survives' for $m$ generations.

Crosses and double-bars will be referred to collectively as
\emph{links}.  If $(xy,t)\in\om$ is a link, then in general we have
that the points $(x,t+)$ and $(x,t-)$ may belong to different loops 
(the same is true for $(y,t+)$ and $(y,t-)$).  We say that a link is a
\emph{monolink} if $(x,t+)$ and $(x,t-)$ belong to the \emph{same}
loop.  The following simple observation will be useful.

\begin{proposition}\label{monolink-prop}
Suppose that $y$ is a child of $x$ in $T^{(m)}$.  If there is
only one link between $x$ and $y$ then it is a monolink.
\end{proposition}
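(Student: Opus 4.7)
The plan is to exploit the fact that, in a tree, the edge $xy$ is the only edge linking the subtree $T_y^+ \se T^{(m)}$ rooted at $y$ to its complement. Since by hypothesis there is only one link on this edge, I expect the loop containing $(x,t+)$ to be forced to visit $(x,t-)$ by a purely topological argument.

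First, I would consider the modified configuration $\om' = \om \sm \{(xy,t)\}$ obtained by removing the unique link on $xy$, and look at the loops it defines. Because $xy$ carries no links in $\om'$, every arc of any loop in the $\om'$-decomposition lies either entirely inside $T_y^+$ or entirely outside, so the two pieces of the graph are completely decoupled. In particular, the loops $\tilde L_y$ and $\tilde L_x$ containing the (non-link) points $(y,t)$ and $(x,t)$ respectively are two distinct loops, one inside $T_y^+$ and one outside.

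Next, I would reintroduce the link $(xy,t)$ and track what it does. It cuts $y$'s time-line at $t$, splitting the arc of $\tilde L_y$ through $(y,t)$ into two half-arcs with endpoints $(y,t\pm)$ and turning $\tilde L_y$ into a path from $(y,t-)$ to $(y,t+)$; symmetrically $\tilde L_x$ becomes a path from $(x,t-)$ to $(x,t+)$. The link itself then glues the four new endpoints pairwise: for a cross, $(y,t+) \lra (x,t-)$ and $(y,t-) \lra (x,t+)$; for a double bar, $(y,t+) \lra (x,t+)$ and $(y,t-) \lra (x,t-)$. In either case the two gluings stitch the two paths together at both pairs of endpoints, producing a single closed loop containing $\tilde L_x$ and $\tilde L_y$ in their entirety. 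Both $(x,t+)$ and $(x,t-)$ then lie on this common loop, which is the monolink property.

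The only point that needs care is the locality claim that adding the single link $(xy,t)$ to $\om'$ leaves all other loops of the $\om'$-decomposition unchanged and only merges $\tilde L_x$ with $\tilde L_y$. This follows from the fact that each loop trajectory is determined by the links it actually visits, but it should be stated cleanly. I don't anticipate any serious obstacle beyond this bookkeeping and a short case split between crosses and double bars.
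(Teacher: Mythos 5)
Your proof is correct and follows essentially the same approach as the paper's: remove the unique link so that the tree structure forces $(x,t)$ and $(y,t)$ into two distinct loops, then observe that reinserting the link merges them into a single loop containing both $(x,t+)$ and $(x,t-)$. You spell out the cut-and-glue mechanics and the cross/double-bar case split more explicitly than the paper, which simply cites the standard fact that adding a link between two distinct loops merges them; both arguments rely on the same two ingredients (the tree topology to get distinctness, finiteness of $T^{(m)}$ so that loops are well-defined closed cycles and the merge is clean).
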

\begin{proof}
Denote the link $(xy,t)$.  In the configuration obtained by removing
this link, the points $(x,t)$ and $(y,t)$ belong to two different
loops, since we are on a tree.  When the link is added back in, the
loops are merged to a single loop, since the tree is finite.  This
proves the claim. 
\end{proof}

Write $A_1$ for the event that, for each child $x$ of $\rho$, there is
at most one link between $\rho$ and $x$.  
Write $A_2$ for the event that:  (i) there is a unique child $x$ of
$\rho$ with exactly 2 links between $\rho$ and $x$, (ii) for all
siblings $x'$ of $x$ there is at most one link between $\rho$ and
$x'$, and (iii) for all children $y$ of $x$ there is at most one link
between $x$ and $y$.  See Fig.~\ref{fig events}.

\begin{centering}
\bfig
\begin{picture}(0,0)%
\includegraphics{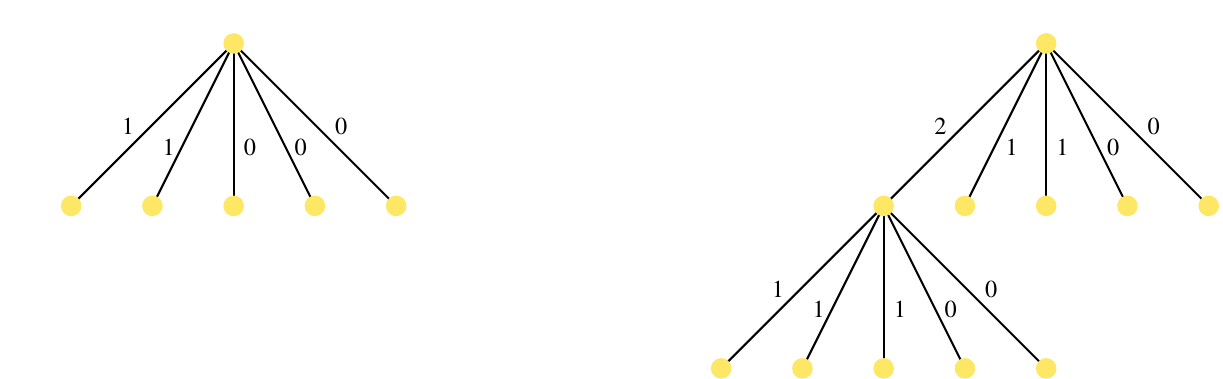}%
\end{picture}%
\setlength{\unitlength}{2565sp}%
\begingroup\makeatletter\ifx\SetFigFont\undefined%
\gdef\SetFigFont#1#2#3#4#5{%
  \reset@font\fontsize{#1}{#2pt}%
  \fontfamily{#3}\fontseries{#4}\fontshape{#5}%
  \selectfont}%
\fi\endgroup%
\begin{picture}(9001,2781)(676,-2836)
\put(5551,-586){\makebox(0,0)[lb]{\smash{{\SetFigFont{12}{14.4}{\rmdefault}{\mddefault}{\updefault}{\color[rgb]{0,0,0}$A_2$:}%
}}}}
\put(7051,-1486){\makebox(0,0)[lb]{\smash{{\SetFigFont{10}{12.0}{\rmdefault}{\mddefault}{\updefault}{\color[rgb]{0,0,0}$x$}%
}}}}
\put(676,-586){\makebox(0,0)[lb]{\smash{{\SetFigFont{12}{14.4}{\rmdefault}{\mddefault}{\updefault}{\color[rgb]{0,0,0}$A_1$:}%
}}}}
\put(2251,-211){\makebox(0,0)[lb]{\smash{{\SetFigFont{10}{12.0}{\rmdefault}{\mddefault}{\updefault}{\color[rgb]{0,0,0}$\rho$}%
}}}}
\put(8251,-211){\makebox(0,0)[lb]{\smash{{\SetFigFont{10}{12.0}{\rmdefault}{\mddefault}{\updefault}{\color[rgb]{0,0,0}$\rho$}%
}}}}
\end{picture}%
\caption{Summing over two events $A_1$ and $A_2$.}
\label{fig events}
\efig
\end{centering}

\noindent
Clearly we have that
\be\label{zeta-split-eq}
\zeta_m=\PP(B^m_{(\rho,0)})=
\PP(B^m_{(\rho,0)}\cap A_1)
+\PP(B^m_{(\rho,0)}\cap A_2)
+\PP(B^m_{(\rho,0)}\sm (A_1\cup A_2)).
\ee
In the rest of
this section we work with $\beta$ of the form
\be
\beta=\tfrac1d+\tfrac\alpha{d^2}
\ee
for $\alpha\in\RR$.

\subsection{Occurrence of long loops}

We now prove part (b) of Proposition \ref{betacrit-prop}.
For given $m\geq1$ and $\eps>0$ we define 
\be
\tilde \s_m = \s_m\wedge \s_{m-1}\wedge (\tfrac\eps d)=
\s_{m}\wedge (\tfrac\eps d).
\ee
Recall that we assume $\alpha\leq A$.
In this section we show the following:
\begin{proposition}\label{large-prop}
For all $m\geq1$ we have 
\[
\s_m\geq \tilde \s_{m-1}+
\tfrac{\tilde \s_{m-1}}{d}(\alpha-\bar\alpha(u))-\tfrac12\tilde\s_{m-1}^2
+ O(d^{-3}),
\]
where the $O(d^{-3})$ is uniform in $m$ (but depends on $A$).
\end{proposition}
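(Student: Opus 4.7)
The plan is to decompose $\sigma_m = 1 - \zeta_m$ using \eqref{zeta-split-eq} and drop the non-negative contribution from $(A_1\cup A_2)^c$, giving
\[
\sigma_m \geq \bbP(A_1\cap\{(\rho,0)\lra\text{gen.\ }m\}) + \bbP(A_2\cap\{(\rho,0)\lra\text{gen.\ }m\}) =: \Delta_1 + \Delta_2,
\]
where $\Delta_i = \bbP(A_i) - \bbP(A_i \cap B^m_{(\rho,0)})$. Each $\Delta_i$ will be computed by conditioning on the Poisson configurations at $\rho$ (and at the special child $x$ in the case of $A_2$) and then exploiting the independence of the Poisson processes on disjoint subtrees together with Proposition~\ref{monolink-prop}.

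For $\Delta_1$: on $A_1$ every link incident to $\rho$ is a monolink, and the blocking events in the subtrees of the children carrying a link are independent, each of probability $\zeta_{m-1}$; this gives the closed form $\Delta_1 = e^{-d\beta}\bigl[(1+\beta)^d - (1+\beta\zeta_{m-1})^d\bigr]$. Taylor expansion with $\beta = 1/d + \alpha/d^2$ and $\sigma_{m-1} = O(1/d)$ yields $\Delta_1 = \sigma_{m-1}\bigl(1 + (\alpha - \tfrac32)/d\bigr) - \tfrac12\sigma_{m-1}^2 + O(d^{-3})$. For $\Delta_2$: condition on the times $t_1<t_2$ and the types of the two $\rho$--$x$ links. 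A direct trajectory trace shows that the loop of $(\rho,0)$ visits $\rho$ during a (periodic) subset of total length $\ell_\rho$ and $x$ during a subset of total length $\ell_x$, with $(\ell_\rho,\ell_x) = (1-\tau,\tau)$ if both links are crosses (probability $u^2$), $(1-\tau,1-\tau)$ if both are double bars (probability $(1-u)^2$), and $(1,1)$ for the mixed cross--double bar pair (probability $2u(1-u)$), where $\tau = t_2 - t_1$. In each case, a single link on a sibling of $x$ is a monolink traversed by the loop iff its time falls in the $\rho$--visited set, an event of conditional probability $\ell_\rho$, and similarly a single link from $x$ to a child contributes $\zeta_{m-2}$ to the blocking probability with probability $\ell_x$. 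Inserting these into $\bbP(A_2\cap B^m_{(\rho,0)})$ via independence, Taylor expanding, and integrating $\tau$ against the density $2(1-\tau)\,d\tau$ gives, to the required precision,
\[
\Delta_2 = \tfrac{1}{d}\Bigl[\tfrac{1+c}{3}\sigma_{m-1} + X_2\,\sigma_{m-2}\Bigr] + O(d^{-3}),
\qquad c = u(1-u), \quad X_2 = \tfrac{u^2}{6} + \tfrac{(1-u)^2}{3} + u(1-u) > 0.
\]

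Adding $\Delta_1 + \Delta_2$ and using the monotonicity $\sigma_{m-2} \geq \sigma_{m-1}$ together with $X_2 > 0$ to lower-bound $X_2\sigma_{m-2} \geq X_2\sigma_{m-1}$ produces
\[
\sigma_m \geq \sigma_{m-1} + \tfrac{\sigma_{m-1}}{d}\Bigl[(\alpha-\tfrac32) + \tfrac{1+c}{3} + X_2\Bigr] - \tfrac12\sigma_{m-1}^2 + O(d^{-3}).
\]
The bracket equals $\alpha - \bar\alpha(u)$ by the algebraic identity $\tfrac{1+c}{3} + X_2 = \tfrac32 - \bar\alpha(u)$, which on setting $v = 1-u$ and expanding reduces to $u + v = 1$. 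Finally, the map $f(s) := s(1 + (\alpha-\bar\alpha)/d) - s^2/2$ is strictly increasing on $[0,1]$ for $d$ large enough (its critical point is at $s \approx 1$), so the bound $\tilde\sigma_{m-1} \leq \sigma_{m-1}$ gives $f(\sigma_{m-1}) \geq f(\tilde\sigma_{m-1})$ and upgrades the estimate to the required form. The main conceptual obstacle is the loop-trajectory case analysis underlying $\Delta_2$ — in particular, Hammond's observation that a cross paired with a double bar forces the loop to visit the entire time circle at both $\rho$ and $x$, strictly more than in either same-type case, which is precisely what yields the $(1-u)^2/6$ correction in $\bar\alpha(u)$; once that trajectory analysis is carried out, the remainder is mechanical Taylor-series bookkeeping at order $d^{-2}$, with all $\sigma^2/d$ cross-terms safely absorbed into $O(d^{-3})$.
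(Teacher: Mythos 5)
Your overall strategy matches the paper's: decompose $\sigma_m = 1 - \zeta_m$ via \eqref{zeta-split-eq}, drop the non-negative $(A_1\cup A_2)^c$ contribution, compute the exact formulas for $\Delta_1$ and $\Delta_2$ using the monolink observation and the three-case analysis of two-link edges (your lengths $(\ell_\rho,\ell_x)$, the density $2(1-\tau)$ for $\tau=t_2-t_1$, and the algebraic identity $\tfrac{1+c}{3}+X_2=\tfrac32-\bar\alpha(u)$ all check out against \eqref{X-probs} and the paper's computation), then Taylor-expand. However, there is a genuine gap in the last step, where you pass from $\sigma_{m-1}$ to $\tilde\sigma_{m-1}$.

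The problem is the order of operations. You first Taylor-expand $\Delta_1$ and $\Delta_2$ in $\sigma_{m-1}$ (and $\sigma_{m-2}$), explicitly assuming $\sigma_{m-1}=O(1/d)$, and obtain $\sigma_m\geq f(\sigma_{m-1})+O(d^{-3})$ with $f(s)=s(1+(\alpha-\bar\alpha)/d)-s^2/2$. You then try to upgrade this to $\tilde\sigma_{m-1}$ using monotonicity of \emph{the quadratic $f$}. This does not work: $\sigma_{m-1}$ need not be $O(1/d)$ (e.g.\ $\sigma_0=1$), and for such values the remainder in your expansion is not $O(d^{-3})$---the coefficient of $s$ in the true expansion is $1+\tfrac{\alpha-3/2}{d}+O(d^{-2})$ with an uncontrolled sign on the $O(d^{-2})$, so for $\sigma_{m-1}$ of order $1$ the error can be as bad as $O(d^{-2})$ or worse. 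Consequently the intermediate claim "$\sigma_m\geq f(\sigma_{m-1})+O(d^{-3})$'' is not established uniformly, and monotonicity of $f$ cannot rescue it. (Moreover, $f$ is not in fact increasing on all of $[0,1]$ when $\alpha<\bar\alpha(u)$, though the weaker property $f(s)\geq f(\tilde\sigma_{m-1})$ for $s\geq\tilde\sigma_{m-1}$ does still hold; this is a secondary issue.)

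The fix---and what the paper does---is to use monotonicity of the \emph{exact} expressions $\Delta_1,\Delta_2$, not of their Taylor approximations. The exact formula $\PP(B^m_{(\rho,0)}\cap A_1)=(e^{-\beta}(1+\beta\zeta_{m-1}))^d$ is increasing in $\zeta_{m-1}$, so substituting $\zeta_{m-1}\leq 1-\tilde\sigma_{m-1}$ (and $\zeta_{m-2}\leq 1-\tilde\sigma_{m-1}$ in the $A_2$ term) gives the rigorous inequalities \eqref{PA1-bd}--\eqref{PA2-bd} \emph{before} any expansion. Since $\tilde\sigma_{m-1}\leq\eps/d$ by definition, the Taylor expansion is then uniformly valid with a true $O(d^{-3})$ error, and the bound emerges directly in terms of $\tilde\sigma_{m-1}$ without any post-hoc monotonicity argument. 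Your calculations of the coefficients are correct, so substituting this corrected order of truncate-then-expand would complete the proof.
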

Given the proposition, we can establish the occurrence of infinite
loops:
\begin{proof}[Proof of Proposition~\ref{betacrit-prop}, part (b)]
We claim that if $\eps<2(\alpha-\bar\alpha(u))$ is small enough then,
for $d$ large enough, we have $\s_m\geq\tilde\s_m\geq\tfrac\eps d$
for all $m$.
Since $\s_0=1$ and $\s_1\geq 1- (e^{-\beta})^d$, the claim holds for
$m=1$, and~Prop.\ref{large-prop} gives the claim by induction.
 Hence $\s_m\geq\tfrac\eps d$ for all $m$, which
gives the result.
\end{proof}

\begin{proof}[Proof of Prop.~\ref{large-prop}]
The starting point is the inequality
\be\label{zeta-ub}
\zeta_m\leq 
\PP(B^m_{(\rho,0)}\cap A_1)
+\PP(B^m_{(\rho,0)}\cap A_2)
+1-\PP(A_1)-\PP(A_2),
\ee
which follows directly from~\eqref{zeta-split-eq}.
First note that
\be\label{PA12}
\PP(A_1)=(e^{-\beta}(1+\beta))^d,\qquad
\PP(A_2)=\tfrac12 d\beta^2e^{-\beta}(e^{-\beta}(1+\beta))^{2d-1}.
\ee
Next note that
\be\label{PA1}
\begin{split}
\PP(B^m_{(\rho,0)}\cap A_1)&=
\sum_{k=0}^d \binom{d}{k} (\b e^{-\b})^k(e^{-\b})^{d-k} 
(\zeta_{m-1})^k\\
&=\big(e^{-\b}(1+\b\zeta_{m-1})\big)^d.
\end{split}
\ee
This relies on Prop.~\ref{monolink-prop}.  Indeed, if there are $k$
children $x_1,\dotsc,x_k$  
of $\rho$ that are linked to $\rho$, with one link each, at
times $t_1,\dotsc,t_k$ say, then $(\rho,0)$ lies in the same loop as
all of $(x_1,t_1),\dotsc,(x_k,t_k)$.  The probability of not being
connected to generation 0 is the same if one has one incoming
link from a parent as if one has none, and is thus $\zeta_{m-1}$ for
each of $(x_1,t_1),\dotsc,(x_k,t_k)$.

In obtaining a similar expression for the case $A_2$, it is useful to
refer to Fig.~\ref{fig two-connection}.  
Let $\L_\rho$ and $\L_x$ denote the restrictions of the subset
highlighted in blue to $\{\rho\}\times[0,1]$ and 
$\{x\}\times[0,1]$, respectively.
Thus $\L_\rho$ and $\L_x$ have respective lengths 
$X$ and $1-X$ in the case of two crosses;
$X$ and $X$ in the case of two double-bars;  and 1 in the case of a
mixture.  It may look obvious that $X$ is uniformly distributed in $[0,1]$; this is however incorrect, since it can be written as 
\[
X=\min\{U_1,U_2\}+1-\max\{U_1,U_2\},
\]
where $U_1,U_2$ are independent uniform random variables on $[0,1]$;  
in particular $\EE[X]=\tfrac23$.

\bfig
\begin{picture}(0,0)%
\includegraphics{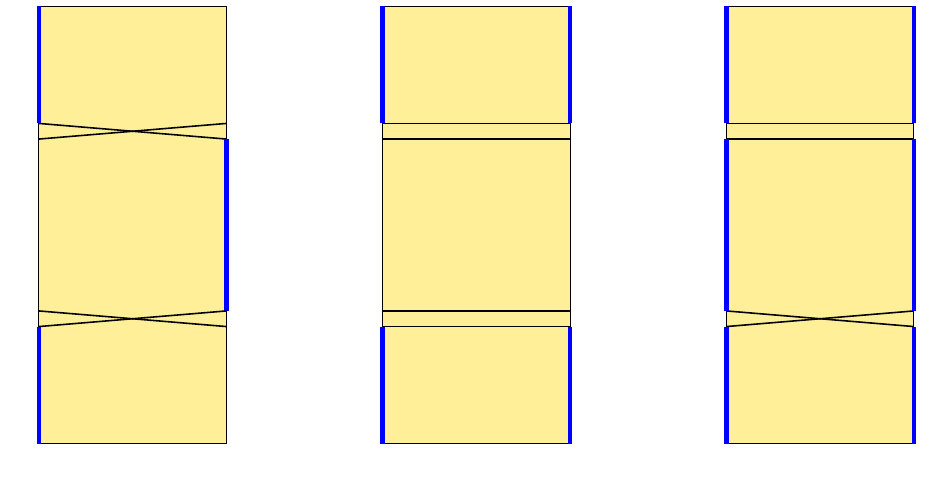}
\end{picture}%
\setlength{\unitlength}{1973sp}%
\begingroup\makeatletter\ifx\SetFigFont\undefined%
\gdef\SetFigFont#1#2#3#4#5{%
  \reset@font\fontsize{#1}{#2pt}%
  \fontfamily{#3}\fontseries{#4}\fontshape{#5}%
  \selectfont}%
\fi\endgroup%
\begin{picture}(9115,4616)(826,-4933)
\put(1051,-4861){\makebox(0,0)[lb]{\smash{{\SetFigFont{10}{12.0}{\rmdefault}{\mddefault}{\updefault}{\color[rgb]{0,0,0}$\rho$}%
}}}}
\put(826,-3436){\makebox(0,0)[lb]{\smash{{\SetFigFont{10}{12.0}{\rmdefault}{\mddefault}{\updefault}{\color[rgb]{0,0,0}$t_1$}%
}}}}
\put(826,-1636){\makebox(0,0)[lb]{\smash{{\SetFigFont{10}{12.0}{\rmdefault}{\mddefault}{\updefault}{\color[rgb]{0,0,0}$t_2$}%
}}}}
\put(4126,-3436){\makebox(0,0)[lb]{\smash{{\SetFigFont{10}{12.0}{\rmdefault}{\mddefault}{\updefault}{\color[rgb]{0,0,0}$t_1$}%
}}}}
\put(7426,-3436){\makebox(0,0)[lb]{\smash{{\SetFigFont{10}{12.0}{\rmdefault}{\mddefault}{\updefault}{\color[rgb]{0,0,0}$t_1$}%
}}}}
\put(4201,-1636){\makebox(0,0)[lb]{\smash{{\SetFigFont{10}{12.0}{\rmdefault}{\mddefault}{\updefault}{\color[rgb]{0,0,0}$t_2$}%
}}}}
\put(7426,-1636){\makebox(0,0)[lb]{\smash{{\SetFigFont{10}{12.0}{\rmdefault}{\mddefault}{\updefault}{\color[rgb]{0,0,0}$t_2$}%
}}}}
\put(2926,-4861){\makebox(0,0)[lb]{\smash{{\SetFigFont{10}{12.0}{\rmdefault}{\mddefault}{\updefault}{\color[rgb]{0,0,0}$x$}%
}}}}
\put(4351,-4861){\makebox(0,0)[lb]{\smash{{\SetFigFont{10}{12.0}{\rmdefault}{\mddefault}{\updefault}{\color[rgb]{0,0,0}$\rho$}%
}}}}
\put(7651,-4861){\makebox(0,0)[lb]{\smash{{\SetFigFont{10}{12.0}{\rmdefault}{\mddefault}{\updefault}{\color[rgb]{0,0,0}$\rho$}%
}}}}
\put(6226,-4861){\makebox(0,0)[lb]{\smash{{\SetFigFont{10}{12.0}{\rmdefault}{\mddefault}{\updefault}{\color[rgb]{0,0,0}$x$}%
}}}}
\put(9526,-4861){\makebox(0,0)[lb]{\smash{{\SetFigFont{10}{12.0}{\rmdefault}{\mddefault}{\updefault}{\color[rgb]{0,0,0}$x$}%
}}}}
\end{picture}%
\caption{When there are two crosses at times $t_1$ and $t_2$, connection is only with the interval $[t_1,t_2$ of length $X = t_2-t_1$; when there are two double bars, connection is only with the interval $[0,t_1] \cup [t_2,1]$ of length $1-X$; when there is one cross and one double bar, connection is with the whole interval $[0,1]$. The latter case is more favourable for connections.}
\label{fig two-connection}
\efig

As before, any link from $\rho$ to a sibling $x'$ of $x$, or from $x$
to a child $y$, is a monolink.  Links that fall in $\L_\rho\cup\L_x$
have a chance of connecting $(\rho,0)$ to generation $m$, the others do
not. 
There are $d$ choices of $x$ and the probability of exactly two links
from $\rho$ to $x$ is $\tfrac12\b^2e^{-\b}$.  Conditioning on this as
well as the lengths $|\L_\rho|$ and $|\L_x|$, and considering the
probabilities for the remaining monolinks to connect $(\rho,0)$ to
generation $m$, 
 one obtains
\be\label{PA2}
\PP(B^m_{(\rho,0)}\cap A_2)=
\tfrac12d\b^2e^{-\b}\EE\big[
\big(e^{-\b}(1+\b\zeta_{m-1}|\L_\rho|+\b (1-|\L_\rho|))\big)^{d-1}
\big(e^{-\b}(1+\b\zeta_{m-2}|\L_x|+\b (1-|\L_x|))\big)^{d}
\big],
\ee
where the expectation is over the lengths $|\L_\rho|$ and
$|\L_x|$.  As noted above, we have
\be\label{X-probs}
\begin{array}{ll}
|\L_\rho|=1-|\L_x|=X, & \mbox{with probability } u^2,\\
|\L_\rho|=|\L_x|=X, & \mbox{with probability } (1-u)^2,\\
|\L_\rho|=|\L_x|=1, & \mbox{with probability } 2u(1-u).\\
\end{array}
\ee
We now use the inequalities
\be
\zeta_{m-1}\leq 1-\tilde\s_{m-1},\qquad
\zeta_{m-2}\leq 1-\tilde\s_{m-1}
\ee
to obtain from \eqref{PA1} that
\be\label{PA1-bd}
\PP(B^m_{(\rho,0)}\cap A_1)\leq
\big(e^{-\b}(1+\b-\tilde\s_{m-1}\beta)\big)^d
\ee
and from \eqref{PA2} that
\be\label{PA2-bd}
\PP(B^m_{(\rho,0)}\cap A_2)\leq 
\tfrac12d\b^2e^{-\b}\EE\big[
\big(e^{-\b}(1+\b-\tilde\s_{m-1}\b|\L_\rho|)\big)^{d-1}
\big(e^{-\b}(1+\b-\tilde\s_{m-1}\b|\L_x|)\big)^{d}
\big].
\ee
In light of~\eqref{PA1-bd},~\eqref{PA2-bd} and~\eqref{PA12},
we will proceed by providing estimates for terms of the form
\be
\big(e^{-\b}(1+\b-\s x\b)\big)^{d},\qquad
\ee
for $\s=O(d^{-1})$ and constant $x\in[0,1]$.
Since $\b=\tfrac1d+\tfrac\alpha{d^2}$, the following are easy to verify:
\be
\begin{split}
e^{-\b}&=1-\tfrac1d+\tfrac1{d^2}(\sfrac12-\a)+
\tfrac1{d^3}(\a-\sfrac16)+O(d^{-4}),\\
1+\b-\s x\b&=1+\tfrac1d+\tfrac1{d^2}(\a-x\s d)
-\tfrac1{d^3}(\a x\s d).
\end{split}
\ee
Here and in what follows the $O(\cdot)$ may depend on $A$ (our
absolute bound on $\alpha$) but is uniform in the other parameters.
Hence
\be
e^{-\b}(1+\b-\s x\b)=
1+\tfrac1{d^2}(-\sfrac12-x\s d)+
\tfrac1{d^3}(\sfrac13-\a+x\s d-\a x\s d)+O(d^{-4}).
\ee
Combining this with 
\be
\big(1+\tfrac{a}{n^2}+\tfrac{b}{n^3}+O(n^{-4})\big)^n
=1+\tfrac an+\tfrac1{n^2}(b+\sfrac{a^2}2)+O(n^{-3})
\ee
we see that
\be\label{exp-1}
\big(e^{-\b}(1+\b-\s x \b)\big)^d=1-
\tfrac1d(\sfrac12+x\s d)+\tfrac1{d^2}\big(\sfrac13-\a+x\s d-\a x \s d
    +\tfrac12(\sfrac12+x\s d)^2\big)+O(d^{-3}).
\ee
Applying this to~\eqref{PA1-bd} and~\eqref{PA12} we obtain
\be\label{PA1-diff}
\PP(A_1)-\PP(B^m_{(\rho,0)}\cap A_1)\geq
\tilde\s_{m-1}-\tfrac{\tilde\s_{m-1}}{d}(\sfrac32-\a)-
\tfrac12\tilde\s_{m-1}^2+ O(d^{-3}).
\ee
Now consider the case of $\PP(A_2)$ and~\eqref{PA2-bd}.
Since 
\be
\tfrac12 d\b^2e^{-\b}=\tfrac1{2d}+O(d^{-2})
\ee
it suffices in this case to use~\eqref{exp-1} to order $\tfrac1d$.  We
may also replace the $d-1$ in the exponent by $d$.
We obtain that
\be
\begin{split}
&\PP(A_2)-\PP(B^m_{(\rho,0)}\cap A_2)\\
&\geq\big(\tfrac1{2d}+O(d^{-2})\big)
\EE\big[\big(1-\tfrac1d\big)-
   \big(1-\tfrac1d(\sfrac12+|\L_\rho|\tilde\s_{m-1}d)\big)
   \big(1-\tfrac1d(\sfrac12+|\L_x|\tilde\s_{m-1}d)\big)\big]+O(d^{-3})\\
&=\tfrac{\tilde \s_{m-1}}{2d}\EE[|\L_\rho|+|\L_x|]+O(d^{-3})\\
&=\tfrac{\tilde \s_{m-1}}{2d}(u^2+\tfrac43(1-u)^2+4u(1-u))+O(d^{-3})\\
&=\tfrac{\tilde \s_{m-1}}{d}(\tfrac12+u(1-u)+\tfrac16(1-u)^2)+O(d^{-3}).
\end{split}
\ee
Here we used~\eqref{X-probs} and $\bbE(X) = \frac23$.  
Adding this to~\eqref{PA1-diff} and
substituting in~\eqref{zeta-ub}, we obtain the claim.
\end{proof}

\subsection{Absence of long loops}

Interestingly, the absence of large loops for $\a<\bar\alpha(u)$ seems
harder to establish than their occurrence for $\a>\bar\alpha(u)$.
Intuitively, this is because for part of the range of $\a$ that we
consider (namely, for $\a>\sfrac12$)
the percolation-tree (obtained by keeping only edges carrying at least
one link) is infinite with positive probability, yet we
still need to show that the loops are always blocked.

We will use the
notations $p_0,p_1,p_2,\dots$ and $p_{\geq2},p_{\geq3},\dots$ for the
probabilities for a Poisson($\b$) random variable.
We also use the shorthand 
\be
q=p_{\geq3}=\sum_{j=3}^\oo e^{-\b} \tfrac{\b^j}{j!}=\tfrac{1}{6d^3}+O(d^{-4}),
\ee
and define for $m\geq3$
\be
\check\s_{m-1}=\sum_{\ell=3}^m (dq)^{\ell-3} \s_{m-\ell}.
\ee
Here and in what follows the $O(\cdot)$ may depend on $A$ (our
absolute bound on $\alpha$) but is uniform in the other parameters.
In this section we prove:

\begin{proposition}\label{small-prop}
For all $m\geq3$ we have
\[
\s_m\leq \check\s_{m-1}
\big(1+\tfrac{\a-\bar\alpha(u)}{d}+O(d^{-2})\big),
\]
where the $O(d^{-2})$ is uniform in $m$ (but depends on $A$).
\end{proposition}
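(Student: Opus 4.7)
The plan is to mirror the proof of Proposition \ref{large-prop} with all inequalities reversed, so as to obtain a lower bound on $\zeta_m$ and hence an upper bound on $\sigma_m$. The starting point is the exact decomposition \eqref{zeta-split-eq}; now, since we want a lower bound on $\zeta_m$, the third summand $\PP(B^m_{(\rho,0)} \sm (A_1 \cup A_2))$ cannot be discarded, and its treatment is the main technical novelty of the proposition.

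For the first two summands I plan to use the identities \eqref{PA1}--\eqref{PA2}, which are exact consequences of the monolink property (Proposition \ref{monolink-prop}), combined now with the plain bounds $\zeta_{m-1} \geq 1 - \sigma_{m-1}$ and $\zeta_{m-2} \geq 1 - \sigma_{m-2}$ (without truncating to $\tilde\sigma$, a device used only to preserve the lower bound on $\sigma_m$). Repeating the Taylor expansions in $\beta = \tfrac1d + \tfrac\alpha{d^2}$ that led to \eqref{exp-1}, and summing the $A_1$ and $A_2$ contributions, I expect a near-mirror of \eqref{PA1-diff} of the form
\[
\PP(A_1) + \PP(A_2) - \PP(B^m_{(\rho,0)} \cap A_1) - \PP(B^m_{(\rho,0)} \cap A_2) \leq \sigma_{m-1}\bigl(1 + \tfrac{\alpha - \bar\alpha(u)}{d}\bigr) + O(d^{-2}).
\]
The quadratic correction $-\tfrac12 \tilde\sigma_{m-1}^2$ present in Proposition \ref{large-prop} need not be recovered here, as dropping it only strengthens an upper bound on $\sigma_m$.

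The heart of the argument is a lower bound on $\PP(B^m_{(\rho,0)} \sm (A_1 \cup A_2))$, and this is where the sum defining $\check\sigma_{m-1}$ originates. The event $(A_1 \cup A_2)^c$ has probability $O(d^{-2})$ and, to leading order, consists of configurations in which a single root-child edge $\rho x_1$ carries at least three links; summing over the choice of child contributes a factor $\sim dq$. On such an edge Proposition \ref{monolink-prop} fails, but the same decomposition into $A_1$-, $A_2$- and ``bad''-type events can be applied one generation deeper at $x_1$. Either a locally good event at $x_1$ closes the recursion, contributing the $\ell = 3$ base term $\sigma_{m-3}$ of $\check\sigma_{m-1}$, or another edge with $\geq 3$ links is found, giving a further factor $dq$ and advancing the exploration by one more generation. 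Iterating along a chain of $\ell - 3$ such bad edges produces the general term $(dq)^{\ell-3}\sigma_{m-\ell}$, and summing over $\ell$ from $3$ to $m$ reproduces $\check\sigma_{m-1}$ exactly.

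The principal obstacle is to verify that this chain-exploration actually yields a valid lower bound on the blocking probability despite the failure of the monolink property on edges with three or more links: one must analyse how a loop through $(\rho,0)$ can traverse such an edge, and check that survival to generation $m$ is still controlled by the survival of a suitable sub-loop in the subtree below the terminal good event of the chain. A secondary technical point is to keep the Taylor error terms uniform in the chain length, so that the prefactor $1 + \tfrac{\alpha-\bar\alpha(u)}{d} + O(d^{-2})$ can be factored out of the full sum rather than accumulating geometrically with $\ell$; the rapid decay $(dq)^{\ell-3} = O(d^{-2(\ell-3)})$ ensures that only the first few terms matter, which is what makes the estimate manageable.
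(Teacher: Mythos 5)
Your high-level plan---lower-bound each of the three summands in~\eqref{zeta-split-eq} and then assemble---is correct and matches the paper's structure; the treatment of the $A_1$- and $A_2$-terms is essentially what the paper packages as Lemma~\ref{lem small-ineq}, so no issue there. The real difficulty is the third term, and there your proposal has a factual error and a missing idea. The factual error: $(A_1\cup A_2)^c$ is \emph{not}, even to leading order, dominated by the event that one root-child edge carries $\geq3$ links. That event contributes $\sim dq\sim\tfrac1{6d^2}$, while the total probability is $\sim\tfrac{13}{24d^2}$, with comparable mass from configurations where two or more children of $\rho$ have $\geq2$ links each, and from configurations where exactly one child has exactly $2$ links but some grandchild has $\geq2$ links. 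The paper's decomposition into the events $A'_k$ and $A''_k$, together with Lemma~\ref{PAs-lem}, exists precisely to handle all of these on an equal footing, not just the $\geq3$-link case.

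The missing idea concerns what you call the principal obstacle: controlling the loop past edges with $\geq3$ links, where the monolink argument fails. The paper does not iterate the $A_1/A_2/\text{bad}$ decomposition generation by generation. Instead it defines a random finite subtree $\check T$ of $T^{(m)}$ (the ``complex component'' of $\rho$) whose interior is built from $\geq3$-link edges and which makes the event $(A_1\cup A_2)^c$ measurable; crucially, blocking is never established across a $\geq3$-link edge, only at the \emph{leaves} of $\check T$, which carry $0$, $1$ or $2$ links. One-link leaves are handled by the monolink property (giving a factor $\zeta_{m-\ell}=1-\s_{m-\ell}$), but two-link leaves require a separate argument---Lemma~\ref{2-block-lem}, which shows that if the two loops seeded at the two link times are both blocked in the subtree, then so is the loop of $(\rho,0)$, yielding a factor $\geq 1-2\s_{m-\ell}$. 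Together with the expectation bounds on leaf counts in Lemma~\ref{EAs-lem}, this produces the precise input $\PP(B^m_{(\rho,0)}\cap(A_1\cup A_2)^c)\geq\PP((A_1\cup A_2)^c)(1-c\check\s_{m-1})$ that drives the recursion. Your write-up correctly anticipates that some such bound is needed and even guesses the form of $\check\s_{m-1}$, but it contains neither the subtree/boundary construction nor the two-link blocking lemma, and both are indispensable.
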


The proposition implies the remaining part of Proposition~\ref{betacrit-prop}: 
\begin{proof}[Proof of Proposition~\ref{betacrit-prop}, part (a)]
Suppose $\a-\bar\alpha(u)\leq-2\eps<0$.  For $d$ large enough we have
$dq\leq 1/d^2$ and, by Prop.~\ref{small-prop},  
that 
\be\label{bound}
\s_m\leq (1-\tfrac\eps d)\check\s_{m-1}
\leq (1-\tfrac\eps d) \sum_{\ell=3}^m \s_{m-\ell} 
\big(\tfrac1{d^2}\big)^{\ell-3},
\ee
for all $m\geq3$.
We show, by induction over $m$, that
if $d$ is large enough, then there are constants $C=C(d)>0$
and $\s=\s(d)\in(0,1)$ such that 
\be\label{exp-ind}
\s_k\leq C\s^k \mbox{ for all } k\geq0.
\ee
This clearly implies the result. 
We choose $\s=1-\tfrac{\eps}{6d}$, and by choosing $C$ appropriately 
we can assume that~\eqref{exp-ind} holds for $k=0,1,2$. 
Suppose that it holds for $k\leq m-1$ for some $m\geq3$.
Then by \eqref{bound}
\be
\s_m\leq C \s^m\, (1-\tfrac\eps d) \big(\tfrac1\s\big)^3
\sum_{l=3}^m \big(\tfrac1{\s d^2}\big)^{\ell-3}
\leq C \s^m\, (1-\tfrac\eps d) \big(\tfrac1\s\big)^3
\tfrac1{1-1/{\s d^2}}.
\ee
But here the factor
\be
(1-\tfrac\eps d) \big(\tfrac1\s\big)^3
\tfrac1{1-1/{\s d^2}} = 
(1-\tfrac\eps d) (1+3\tfrac\eps {6d} + O(d^{-2}))
(1+O(d^{-2}))=
1-\tfrac\eps{2d}+O(d^{-2})\leq1,
\ee
provided $d$ is large enough.  Hence~\eqref{exp-ind} follows for
$k=m$, as required.
\end{proof}

\begin{lemma}
\label{lem small-ineq}
Assume that
\[
\PP(B^m_{(\rho,0)}\cap (A_1\cup A_2)^c)
\geq \PP( (A_1\cup A_2)^c)[1-c\check\s_{m-1}]
\]
for some constant $c>0$. Then the bound of Proposition~\ref{small-prop} holds true.
\end{lemma}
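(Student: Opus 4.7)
The plan is to combine the decomposition~\eqref{zeta-split-eq} with the hypothesis of the lemma, giving the natural upper-bound analogue of the strategy used in Proposition~\ref{large-prop}. Writing
\[
\s_m = \sum_{i=1,2}\bigl[\PP(A_i) - \PP(B^m_{(\rho,0)}\cap A_i)\bigr] + \bigl[\PP((A_1\cup A_2)^c) - \PP(B^m_{(\rho,0)}\cap (A_1\cup A_2)^c)\bigr],
\]
the assumed inequality bounds the last bracket by $c\,\PP((A_1\cup A_2)^c)\check\s_{m-1}$, and this is $O(d^{-2})\check\s_{m-1}$ since $\PP((A_1\cup A_2)^c) = 1-\PP(A_1)-\PP(A_2) = O(d^{-2})$ by~\eqref{PA12} and~\eqref{exp-1}.

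For the $A_1$ contribution I would use~\eqref{PA1} to write it as $g(0)-g(\s_{m-1})$, where $g(s):=(e^{-\b}(1+\b-\b s))^d$ is convex and decreasing on $[0,1]$ (direct computation gives $g''\geq0$). Convexity yields the uniform bound $g(0)-g(\s_{m-1}) \leq \s_{m-1}|g'(0)|$, and the expansion used in the proof of Prop.~\ref{large-prop} evaluates $|g'(0)| = d\b\,e^{-d\b}(1+\b)^{d-1} = 1+\tfrac{\a-3/2}{d}+O(d^{-2})$. For the $A_2$ contribution I would factor the integrand in~\eqref{PA2} as $g_1(|\L_\rho|)\,g_2(|\L_x|)$ with each $g_i$ convex, decreasing, and bounded by $1$, and use
\[
g_1(0)g_2(0) - g_1(y_1)g_2(y_2) \leq \bigl(g_1(0)-g_1(y_1)\bigr) + \bigl(g_2(0)-g_2(y_2)\bigr)
\]
followed by the one-variable convexity bound on each factor. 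Combined with $\tfrac12 d\b^2 e^{-\b} = \tfrac1{2d}+O(d^{-2})$, the laws~\eqref{X-probs}, and $\s_{m-1}\leq\s_{m-2}$, this produces
\[
\PP(A_2) - \PP(B^m_{(\rho,0)}\cap A_2) \leq \tfrac{\s_{m-2}}{2d}\bigl(1+2u(1-u)+\tfrac13(1-u)^2\bigr) + O(d^{-2})\s_{m-2}.
\]

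Summing the three contributions and using the monotonicity chain $\s_{m-1}\leq\s_{m-2}\leq\s_{m-3}\leq\check\s_{m-1}$ (the last inequality from the $\ell=3$ term in the definition of $\check\s_{m-1}$), the coefficient of $1/d$ in the resulting bound is $\a-\tfrac32+\tfrac12+u(1-u)+\tfrac16(1-u)^2 = \a-\bar\alpha(u)$, yielding the claim. The main obstacle will be to ensure that every error term appears as $O(d^{-2})\check\s_{m-1}$ rather than a stand-alone $O(d^{-2})$, since a stand-alone term would break the induction used in the proof of Proposition~\ref{betacrit-prop}(a). The convexity bounds are essential for this: they hold uniformly for $\s\in[0,1]$ and automatically absorb the correction into the coefficient of $\s$, whereas the Taylor expansion used in Prop.~\ref{large-prop} is only valid for $\s=O(d^{-1})$, a restriction we cannot justify a priori since the very goal here is to \emph{prove} that $\s_m$ is small.
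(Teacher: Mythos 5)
Your argument is correct and reproduces the paper's proof in essentially identical form: the same decomposition~\eqref{zeta-split-eq}, the hypothesis controlling the $(A_1\cup A_2)^c$ term, and a tangent-line/convexity bound for the $A_1$ and $A_2$ terms that is exactly Bernoulli's inequality $(1-x)^n\geq1-nx$ in the form the paper uses (indeed $|g'(0)|=\PP(A_1)\tfrac{d\b}{1+\b}$). The arithmetic on the $1/d$ coefficient and the observation that every error must come multiplied by $\check\s_{m-1}$ also match the paper's treatment.
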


\begin{proof}
We note that, by~\eqref{PA12}, \eqref{PA1} and~\eqref{PA2}, we have that
\be\label{PA12-ineq}
\begin{split}
\PP(B^m_{(\rho,0)}\cap A_1)&\geq
\PP(A_1)\big(1-\s_{m-1}\big(1+\tfrac{\a-1}{d}+O(d^{-2})\big)\big), \\ 
\PP(B^m_{(\rho,0)}\cap A_2)&\geq
\PP(A_2)\big(1-\s_{m-2}\big(1+2u(1-u)+\tfrac13(1-u)^2+O(d^{-1})\big)\big). 
\end{split}
\ee
This uses the inequalities $\s_{m-1}\leq \s_{m-2}$ and 
$(1-x)^n\geq 1-nx$ for $x\in[0,1]$ and $n\geq1$, as well as the
asymptotics  
\be
\frac{\b}{1+\b}=\frac1d+\frac{\a-1}{d^2}+O(d^{-3}).
\ee
We have $\check\s_{m-1}\geq\s_{m-2}\geq\s_{m-1}$
and $\PP(A_1)=1-\tfrac1{2d}+O(d^{-2})$
and $\PP(A_2)=\tfrac1{2d}+O(d^{-2})$.
Together with the assumption of the lemma, we have,
using \eqref{zeta-split-eq},
\be
\begin{split}
\sigma_m &\leq 1 - \bbP(A_1) \, \bigl[ 1 - \sigma_{m-1} (1 + \tfrac{\alpha-1}d + O(d^{-2}) ) \bigr] \\
&\quad - \bbP(A_2) \, \bigl[ 
1 - \sigma_{m-2} (1 + 2u(1-u) +\tfrac13(1-u)^2+ O(d^{-1}) ) \bigr] 
- \bbP((A_1 \cup A_2)^{\rm c}) \, [1 - c\check\sigma_{m-1}] \\
&= \sigma_{m-1} \, \bbP(A_1) \, \bigl[ 1 + \tfrac{\alpha-1}d +
O(d^{-2}) \bigr] 
+ \sigma_{m-2} \, \bbP(A_2) \, \bigl[ 1 + 2u(1-u) +
\tfrac13(1-u)^2+ O(d^{-1}) \bigr] \\
&\quad + c \check\sigma_{m-1} \bbP((A_1 \cup A_2)^{\rm c})  \\
&\leq \check\sigma_{m-1} \, \bigl[ 1 + \tfrac{\alpha-1}d - \tfrac1{2d}
+ O(d^{-2}) 
+ \tfrac1{2d} + \tfrac{u(1-u) +\tfrac16(1-u)^2}{d} + O(d^{-2}) \bigr].
\end{split}
\ee
This is indeed the upper bound of Proposition~\ref{small-prop}.
\end{proof}

The rest of this section will be devoted to the proof of the assumption of Lemma~\ref{lem small-ineq}.

We write $(A_1\cup A_2)^c$ as a union 
\be
(A_1\cup A_2)^c = \bigcup_{k=1}^d (A'_k\cup A''_k),
\ee
of the disjoint events
\begin{itemize}[leftmargin=*]
\item $A_1'$: that $\rho$ has exactly one child with $\geq3$ links and
  all other children of $\rho$ have 0 or 1 links;
\item $A_k'$ for $k\geq2$: that $\rho$ has exactly $k$ children with
  $\geq2$ links;
\item $A_k''$ for $k\geq1$: that $\rho$ has exactly one child $x$ with exactly 2 links,   all other children of $\rho$ have 0 or 1 links, and
  $x$ has exactly $k$ children with $\geq2$ links.
\end{itemize}
The following bounds will be useful later:
\begin{lemma}\label{PAs-lem}
For $d$ large enough we have
\[
\sum_{k=1}^d k \PP(A'_k)\leq 2 \sum_{k=1}^d  \PP(A'_k),
\quad\mbox{ and }\quad
\sum_{k=1}^d k \PP(A''_k)\leq 2 \sum_{k=1}^d  \PP(A''_k).
\]
\end{lemma}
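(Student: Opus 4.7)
The plan is to reduce both inequalities to elementary calculations with binomial sums; the only input from the Poisson processes will be the small size of $p_{\geq 2}$, $p_{\geq 3}$ and of $1-p_{\leq 1}^d$, writing $p_{\leq 1}:=p_0+p_1$ for brevity.

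First I would write down explicit formulas for the probabilities, using independence of the link processes on distinct edges. For $k\geq 2$ one has $\PP(A'_k)=\binom{d}{k}p_{\geq 2}^k p_{\leq 1}^{d-k}$, while the special definition of $A'_1$ gives $\PP(A'_1)=d\, p_{\geq 3}\, p_{\leq 1}^{d-1}$. For $k\geq 1$, conditioning first on the links from $\rho$ and then on the links from the distinguished child $x$ yields
\[
\PP(A''_k)=d\, p_2\, p_{\leq 1}^{d-1}\cdot \binom{d}{k}p_{\geq 2}^k p_{\leq 1}^{d-k}.
\]

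Next I would collapse the $k$-sums using $k\binom{d}{k}=d\binom{d-1}{k-1}$ together with the binomial theorem, giving
\[
\sum_{k=1}^d k\binom{d}{k}p_{\geq 2}^k p_{\leq 1}^{d-k}=d\, p_{\geq 2},\qquad \sum_{k=1}^d\binom{d}{k}p_{\geq 2}^k p_{\leq 1}^{d-k}=1-p_{\leq 1}^d.
\]
For $A''$ the common factor $d\, p_2\, p_{\leq 1}^{d-1}$ cancels on both sides, so the claim reduces to $d\, p_{\geq 2}\leq 2(1-p_{\leq 1}^d)$. For $A'$ I would extract the $k=1$ term by hand and rearrange to obtain the equivalent form
\[
d\, p_{\geq 2}\bigl(1+p_{\leq 1}^{d-1}\bigr)\leq \PP(A'_1)+2\bigl(1-p_{\leq 1}^d\bigr).
\]

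Finally I would verify these inequalities by Poisson asymptotics at $\beta=d^{-1}+\alpha d^{-2}$: one has $p_{\geq 2}=\tfrac1{2d^2}+O(d^{-3})$, $p_{\geq 3}=\tfrac1{6d^3}+O(d^{-4})$, and $p_{\leq 1}^d=1-\tfrac1{2d}+O(d^{-2})$. The $A''$ inequality then reads $\tfrac1{2d}+O(d^{-2})\leq\tfrac1d+O(d^{-2})$, which is immediate for $d$ large. The $A'$ inequality is the only genuinely subtle point, and is the main obstacle: a careful expansion shows that $d\, p_{\geq 2}(1+p_{\leq 1}^{d-1})$ and $2(1-p_{\leq 1}^d)$ actually agree through order $d^{-2}$, so the $\tfrac1d$ and $d^{-2}$ terms cancel entirely and the whole positive margin must come from $\PP(A'_1)\sim\tfrac1{6d^2}$, yielding $\text{RHS}-\text{LHS}=\PP(A'_1)+O(d^{-3})$. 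This is precisely why the definition of $A'_1$ uses ``$\geq 3$ links'' rather than the naive $k=1$ extrapolation of $A'_k$: the factor $p_{\geq 3}=O(d^{-3})$ (as opposed to $p_{\geq 2}=O(d^{-2})$) keeps $A'_1$ disjoint from $A_2$ and is exactly what produces the strictly positive $d^{-2}$-margin for $d$ sufficiently large.
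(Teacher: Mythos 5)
Your proof is correct and follows essentially the same route as the paper: write the probabilities explicitly in terms of Poisson point probabilities, collapse the $k$-sums via $k\binom{d}{k}=d\binom{d-1}{k-1}$ and the binomial theorem, and verify the resulting inequalities with the asymptotics $p_{\geq2}=\tfrac1{2d^2}+O(d^{-3})$, $p_{\geq3}=\tfrac1{6d^3}+O(d^{-4})$. The only cosmetic difference is that the paper evaluates the two sums for $A'$ directly as $\tfrac{5/12}{d^2}+O(d^{-3})$ and $\tfrac{7/24}{d^2}+O(d^{-3})$ (ratio $\to 10/7<2$), whereas you rearrange to exhibit the margin as $\PP(A'_1)+O(d^{-3})$; these are the same computation, though your form nicely highlights the role of the ``$\geq3$ links'' condition in the definition of $A'_1$.
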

\begin{proof}
We start with the $A''_k$:s, which is actually the simpler case.  For
convenience, we write $A''_0$ for the event that $\rho$ has exactly one
child $x$ with exactly 2 links, and that the other children  of $\rho$ have
0 or 1 links.  Then
\be
\sum_{k=1}^d k\PP(A''_k)=
\PP(A''_0) \sum_{k=1}^d k\PP(A''_k\mid A''_0)=
\PP(A''_0) dp_{\geq2},
\ee
since the last sum is the expected number of children of $x$ with two links or more.  Similarly
\be
\sum_{k=1}^d \PP(A''_k)=
\PP(A''_0) \sum_{k=1}^d \PP(A''_k\mid A''_0)=
\PP(A''_0) \; (1-(1-p_{\geq2})^d ).
\ee
It is easy to deduce that
\be
\frac{\sum_{k=1}^d k\PP(A''_k)}
{\sum_{k=1}^d \PP(A''_k)}\to 1,
\mbox{ as } d\to\oo,
\ee
which gives the claim for the $A''_k$.
For the $A'_k$ a straightforward but tedious calculation gives that
\be
\sum_{k=1}^d k\PP(A'_k)=
dp_{\geq2}-dp_2(1-p_{\geq2})^{d-1}=\frac{5/12}{d^2}+O(d^{-3}),
\ee
and 
\be
\sum_{k=1}^d \PP(A'_k)=
1-(1-p_{\geq2})^d-dp_2(1-p_{\geq2})^{d-1}=\frac{7/24}{d^2}+O(d^{-3}).
\ee
Hence
\be
\frac{\sum_{k=1}^d k\PP(A'_k)}
{\sum_{k=1}^d \PP(A'_k)}\to \frac{10}{7}<2,
\mbox{ as } d\to\oo,
\ee
which gives the claim for the $A'_k$.
\end{proof}

The main idea in establishing the assumption of Lemma~\ref{lem
small-ineq} is to use a certain random subtree $\check T$ of
$T^{(m)}$, which we think of as the ``complex component of $\rho$''.  
We will use $\check T$ to avoid dealing explicitly with the
possibility that the loop of $(\rho,0)$ propagates across edges
carrying $\geq3$ links.  Since edges carrying $\geq3$ links are rare,
the connected component of $\rho$ in the subtree spanned by such edges
is typically small.  This subtree is bounded by edges carrying 0, 1 or
2 links each, and we will use estimates on the probability that a loop
is blocked after traversing such an edge.  It will be convenient to
define $\check T$ slightly differently than as the subtree spanned by
edges with $\geq3$ links, since we want the event $(A_1\cup A_2)^c$ be
be $\check T$-measurable.

In order to define $\check T$, it helps to think that it consists of
``bulk sites'' and ``end sites''. The root $\rho$ is a bulk site by
definition. Assume that the tree has been defined up to level $k$, and
let $x$ be a bulk site at level $k$. An offspring $y$ is
\begin{itemize}
\item[(a)] a bulk site if the number of links $n_{xy}$
on the edge $x$ is equal to 3,4,...;
\item[(b)] a bulk site if $x=\rho$, $n_{xy}=2$, and all
siblings $z$ of $y$ satisfy $n_{xz} \in \{0,1\}$;
\item[(c)] an end site if $n_{xy} \in \{0,1,2\}$, unless there is situation (b).
\end{itemize}
Note that the event $(A_1\cup A_2)^c$ is measurable with respect to
$\check T$.

We write
$\check\om$ for the configuration of crosses and double-bars
within $\check T$.   For $j=1,2$ and $1\leq\ell\leq m-1$
we write $\cE^{(j)}_\ell$ for the
set of leaves (end sites)
of $\check T$ at distance $\ell$ from $\rho$ and with
$j$ incoming links.  If 
$x\in\cE^{(1)}_\ell$ we write $t(x)$ for the time-coordinate of the
incoming link, and if $y\in\cE^{(2)}_\ell$ we write $t_1(y)$ and
$t_2(y)$ for the time-coordinates of the two incoming links.
We also let $\cE_\ell=\cE_\ell^{(1)}\cup\cE_\ell^{(2)}$
($1\leq\ell\leq m-1$) and we let $\cE_m$ be the set of vertices of
$\check T$ at distance $m$ from $\rho$.

For $y\in T^{(m)}$ we let $T_y$ be the subtree rooted at $y$,
consisting of $y$ and all its descendants in $T^{(m)}$.
For a sub-tree $T'$ of $T^{(m)}$ we write $\Om(T')$ for the set of
configurations of crosses and double-bars in $T'$.  In particular,
$\Om(T_y)$ is the set of configurations in the subtree rooted at $y$.   
We write $B^k_{(y,t)}\se \Om(T_y)$ for the set of configurations in
$T_y$ such that the loop of $(y,t)$ visits no vertex $z\in T_y$ at
distance $k$ from $y$ (note that we do not consider any incoming links
to $y$ from its parent).  
And we write $B^m_{(\rho,0)}(y)\se\Om(T^{(m)})$ for the event
that  the loop of $(\rho,0)$ visits no vertex $z\in T_y$ at
distance $m$ from $\rho$, i.e.\ the loop does not reach distance $m$ in the
subtree rooted at $y$.

The next lemma concerns the probability of blocking a loop at a vertex
$y$ when there are two links between $y$ and its parent.

\begin{lemma}\label{2-block-lem}
Let $y$ be a vertex of $T^{(m)}$ at distance $\ell$ from $\rho$, let
$0<t_1<t_2<1$, let $\om'\in\Om(T^{(m)}\sm T_y)$ be arbitrary, and let
$\om''\in B^{m-\ell}_{(y,t_1)}\cap B^{m-\ell}_{(y,t_2)}$.  Consider a
configuration $\om\in\Om(T^{(m)})$ whose restriction to $T^{(m)}\sm
T_y$ (respectively, $T_y$) is $\om'$ (respectively, $\om''$) and in
addition has exactly two links to $y$ from its parent, at times $t_1$ and $t_2$.
Then $\om\in B^m_{(\rho,0)}(y)$.
\end{lemma}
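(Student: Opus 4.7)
The plan is to track the loop of $(\rho,0)$ under $\omega$ and to show that any excursion it makes into the subtree $T_y$ coincides with an arc of one of the two $\omega''$-loops named in the hypothesis. Since each of those $\omega''$-loops avoids generation $m-\ell$ inside $T_y$, the $\omega$-loop cannot reach generation $m$ inside $T_y$ either, which is exactly the event $B^m_{(\rho,0)}(y)$.

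The first step is the topological observation that because $T^{(m)}$ is a tree, the subtree $T_y$ is attached to its complement only through the single edge joining $y$ to its parent. In $\omega$ this edge carries exactly two links, at times $t_1$ and $t_2$, so the loop of $(\rho,0)$ can enter or leave $T_y$ only at one of the four half-points $(y,t_1^\pm)$, $(y,t_2^\pm)$.

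The second step is a direct comparison of the two trajectories. After the $\omega$-loop enters $T_y$, its future motion is governed entirely by the links present inside $T_y$. But $\omega$ and $\omega''$ agree on $T_y$, and $\omega''$ carries no link on the parent-edge of $y$; so the $\omega$-trajectory inside $T_y$ matches the $\omega''$-trajectory from the corresponding entry point, jump by jump, until it returns to $y$ at time $t_1$ or $t_2$, at which moment the $\omega$-trajectory uses the parent-link to exit $T_y$. Consequently every maximal subarc of the $\omega$-loop of $(\rho,0)$ that lies in $T_y$ is contained in the $\omega''$-loop of either $(y,t_1)$ or $(y,t_2)$. By the assumption $\omega''\in B^{m-\ell}_{(y,t_1)}\cap B^{m-\ell}_{(y,t_2)}$, neither of these loops visits a vertex at distance $m-\ell$ from $y$, i.e.\ at distance $m$ from $\rho$, giving the desired conclusion.

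The main obstacle I anticipate is notational bookkeeping: one has to be careful with the orientation of the trajectory and with the cross/double-bar rule at each of the two new links, in order to identify unambiguously which $\omega''$-trajectory the $\omega$-trajectory joins upon entering $T_y$. A useful simplification is that the cases where $(y,t_1)$ and $(y,t_2)$ belong to the same $\omega''$-loop and where they belong to distinct ones are handled identically by the argument above.
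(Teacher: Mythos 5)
Your argument is correct, and it takes a somewhat different route from the paper's. The paper forms $\om$ by starting from the disjoint union $\om'\cup\om''$ and inserting the two parent-edge links one at a time, then tracks how the loop partition changes at each insertion: adding $(xy,t_1)$ merges $L_1'$ with $L_1''$ (which were disjoint); adding $(xy,t_2)$ either merges further with $L_2''$ or splits a loop; and in neither step does a loop of $T^{(m)}\sm T_y$ ever merge with a loop reaching depth $m$ inside $T_y$. You instead follow the trajectory of $\cL_{(\rho,0)}(\om)$ directly and decompose its excursions into $T_y$, observing that each excursion must enter and exit through the two parent-edge links and in between reproduces an arc of the $\om''$-loop of $(y,t_1)$ or of $(y,t_2)$. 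The paper's surgery argument sidesteps the orientation and cross/double-bar bookkeeping that you rightly flag as the main nuisance in the trajectory-tracking version, since ``merge or split'' is a statement about the unordered loop partition; on the other hand, your version makes the role of the hypothesis $\om''\in B^{m-\ell}_{(y,t_1)}\cap B^{m-\ell}_{(y,t_2)}$ more immediately visible. Both approaches hinge on the same structural fact that the parent edge is the unique interface between $T_y$ and its complement, and both treat the cases $L_1''=L_2''$ and $L_1''\neq L_2''$ uniformly.
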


This lemma is useful since the event 
$B^{m-\ell}_{(y,t_1)}\cap B^{m-\ell}_{(y,t_2)}$ is defined entirely in
the subgraph $T_y$, which is disjoint from $T^{(m)}\sm T_y$, and due
to the bound 
\be\label{2-sigma-ineq}
\PP(B^{m-\ell}_{(y,t_1)}\cap B^{m-\ell}_{(y,t_2)})\geq
1-2\s_{m-\ell}.
\ee
\begin{proof}
Write $x$ for the parent of $y$.  In $\om'$ the points $(x,t_1)$ and
$(x,t_2)$ belong to some loops $L_1'$, $L_2'$, where possibly
$L_1'=L_2'$.  Similarly, in $\om''$ the points $(y,t_1)$ and
$(y,t_2)$ belong to some loops $L_1''$, $L_2''$, possibly equal.
Note that neither $L_1''$ nor $L_2''$ reaches distance $m-\ell$ from
$y$ in $T_y$.

We can form $\om$ by starting with $\om'\cup\om''$, and putting in
the links $(xy,t_1)$ and $(xy,t_2)$ one at a time.  When putting in
$(xy,t_1)$ we necessarily merge $L_1'$ and $L_1''$, since they were
disjoint before.  When we then put in $(xy,t_2)$ we either cause
another merge, involving $L_2''$, or we cause a loop to split.  In
either case, no loop of $T^{(m)}\sm T_y$ ever merges with a loop  
which reaches distance $m$ from $\rho$ in $T_y$.
\end{proof}

Note that, writing $\check\PP(\cdot)$ for 
$\PP(\cdot\mid \check T, \check\om)$, 
\be
\begin{split}
\PP(B^m_{(\rho,0)}\cap (A_1\cup A_2)^c)&=
\EE[\one_{ (A_1\cup A_2)^c}\check\PP(B^m_{(\rho,0)})]\\
&=\EE\Big[\one_{ (A_1\cup A_2)^c}
   \check\PP\Big(\bigcap_{\ell=1}^m
           \bigcap_{y\in\cE_\ell} B^m_{(\rho,0)}(y)\Big)\Big].
\end{split}
\ee
But by Lemma~\ref{2-block-lem} and~\eqref{2-sigma-ineq} we have
\be
\begin{split}
\check\PP\Big(\bigcap_{\ell=1}^{m}
           \bigcap_{y\in\cE_\ell} B^m_{(\rho,0)}(y)\Big)&\geq
\check\PP\Big(\bigcap_{\ell=1}^{m-1}
           \bigcap_{x\in\cE^{(1)}_\ell} B^{m-\ell}_{(x,t(x))}
           \bigcap_{y\in\cE^{(2)}_\ell} \big(B^{m-\ell}_{(y,t_1(y))}
                        \cap B^{m-\ell}_{(y,t_2(y))}\big)
                        \Big) \one\{\cE_m=\es\}\\
&=\prod_{\ell=1}^{m-1} \prod_{x\in\cE^{(1)}_\ell} 
                \check\PP(B^{m-\ell}_{(x,t(x))})
           \prod_{y\in\cE^{(2)}_\ell} \check\PP\big(B^{m-\ell}_{(y,t_1(y))}
                        \cap B^{m-\ell}_{(y,t_2(y))}\big) \one\{\cE_m=\es\}\\
&\geq \prod_{\ell=1}^{m-1} (1-\s_{m-\ell})^{|\cE^{(1)}_\ell|}
               (1-(2\s_{m-\ell})\wedge 1)^{|\cE^{(2)}_\ell|}\one\{\cE_m=\es\}\\
&\geq 1-2\sum_{\ell=1}^m \s_{m-\ell} |\cE_\ell|.
\end{split}
\ee
(Here $\s_0=1$, and the last line
is negative when $\cE_m \neq \emptyset$.)   Hence 
\be
\PP(B^m_{(\rho,0)}\cap (A_1\cup A_2)^c)\geq
\EE\Big[ \one_{ (A_1\cup A_2)^c}
\Big(1-2\sum_{\ell=1}^m \s_{m-\ell} |\cE_\ell|\Big)\Big],
\ee
and the assumption of Lemma ~\ref{lem small-ineq} follows if we show that
\be\label{small-ineq-2}
\sum_{\ell=1}^m \s_{m-\ell} 
\EE\big[\one_{ (A_1\cup A_2)^c}|\cE_\ell|\big]\leq
48 \PP((A_1\cup A_2)^c)\check\s_{m-1}.
\ee 
The following will let us establish~\eqref{small-ineq-2}
(and hence Proposition~\ref{small-prop}):

\begin{lemma}\label{EAs-lem}
For $d$ large enough, $k\geq1$,  $m\geq3$, 
and $1\leq\ell\leq m$, we have
\[
\EE\big[\one_{A'_k}|\cE_\ell|\big]\leq 4k\PP(A'_k)
       a'_\ell \qquad \mbox{ and } \qquad
\EE\big[\one_{A''_k}|\cE_\ell|\big]\leq 4k\PP(A''_k)
       a''_\ell,
\]
where $a'_1=1$, $a'_\ell=(dq)^{\ell-2}$ for $\ell\geq2$,
$a''_1=a''_2=1$, and $a''_\ell=(dq)^{\ell-3}$ for $\ell\geq3$.
\end{lemma}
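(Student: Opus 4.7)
The plan is to exploit the branching structure of $\check T$ beyond the immediate vicinity of $\rho$.  Below the root, the construction rules of $\check T$ show that a vertex $y$ at level $\geq 2$ is a bulk site precisely when its parent edge carries $\geq 3$ links, since rule (b) applies only at $\rho$.  Hence the bulk sites of $\check T$ form a strongly subcritical Galton--Watson process in which each bulk parent (other than $\rho$) has a Binomial$(d,q)$ number of bulk children, independently across parents by independence of the Poisson processes on disjoint edges.  Writing $N_\ell$ for the number of bulk sites at level $\ell$, this gives $\EE[N_\ell\mid N_j]=N_j(dq)^{\ell-j}$ for $\ell\geq j\geq 1$.

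Next, every end site in $\cE_\ell=\cE_\ell^{(1)}\cup\cE_\ell^{(2)}$ with $\ell\geq 2$ is a non-bulk child of a bulk site at level $\ell-1$ whose connecting edge carries $1$ or $2$ links.  Conditionally on the bulk structure, at each such bulk parent the number of such children is Binomial$(d,p_1+p_2)$, and $d(p_1+p_2)\leq 2$ for $d$ large, so
\[
\EE[|\cE_\ell|\mid N_{\ell-1}]\leq 2 N_{\ell-1},\qquad \ell\geq 2.
\]

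Under $A'_k$ with $k\geq 1$, exactly $k$ children of $\rho$ carry $\geq 2$ links and the rest carry $0$ or $1$, so $N_1\leq k$ deterministically; moreover the Poisson processes on every edge below the first generation are independent of the event $A'_k$.  Combining the two displays yields $\EE[|\cE_\ell|\mid A'_k]\leq 2k(dq)^{\ell-2}$ for $\ell\geq 2$.  For $\ell=1$, $|\cE_1|$ is bounded by the (at most $k$) $2$-link children of $\rho$ plus its $1$-link children, the latter having conditional mean at most $dp_1/(p_0+p_1)\leq 2$; hence $\EE[|\cE_1|\mid A'_k]\leq k+2\leq 4k$.  Multiplying through by $\PP(A'_k)$ gives the first bound.

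For $A''_k$, rule (b) forces the unique $2$-link child $x$ of $\rho$ to be bulk, so $N_1=1$ deterministically, and under $A''_k$ exactly $k$ children of $x$ carry $\geq 2$ links, so $N_2\leq k$.  The Poisson processes on edges below level $2$ remain independent of $A''_k$, and hence $\EE[N_{\ell-1}\mid A''_k]\leq k(dq)^{\ell-3}$, giving $\EE[|\cE_\ell|\mid A''_k]\leq 2k(dq)^{\ell-3}$ for $\ell\geq 3$.  For $\ell\in\{1,2\}$, direct counting as above gives $\EE[|\cE_\ell|\mid A''_k]\leq k+2\leq 4k$.  The main obstacle is simply the careful bookkeeping needed to verify that conditioning on $A'_k$ (respectively $A''_k$) fixes only the link configurations on the specific edges near $\rho$ (respectively near $\rho$ and $x$), leaving the Poisson processes on deeper edges independent so that the Galton--Watson recursion applies unchanged below these base generations; the factor $4$ in the statement leaves ample slack to absorb the $O(d^{-1})$ correction in $d(p_1+p_2)$ and the boundary-level estimates.
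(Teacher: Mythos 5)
Your proof is correct and follows essentially the same approach as the paper's: you exploit the Galton--Watson structure of the bulk sites of $\check T$ (with Bin($d,q$) offspring below the first generation, since rule (b) applies only at $\rho$), bound the number of first-generation bulk sites deterministically by $k$ under $A'_k$ (and $N_1=1$, $N_2\leq k$ under $A''_k$), and then bound end sites at level $\ell$ as Bin($d,p_1+p_2$) children of the bulk sites at level $\ell-1$. The paper writes out the conditional distributions a bit more explicitly (e.g.\ Bin$(d-k,p_1/(p_0+p_1))$, Bin$(k,p_{\geq3}/p_{\geq2})$) where you use slightly cruder deterministic bounds, but the skeleton and the key independence observation are the same.
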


\begin{proof}
It suffices to bound the conditional expectations
\be
\EE\big[|\cE_\ell| \left| \, A'_k \big]\right.\quad\mbox{and}\quad
\EE\big[|\cE_\ell| \left| \,A''_k \big]\right.
\ee
by the appropriate functions.  We prove the result for the $A'_k$, the
arguments for the $A''_k$ are similar.

There are several cases to consider, we start with $\ell=1$.
Given $A'_k$, the number of 1's in
generation $\ell=1$ has distribution Bin($d-k,p_1/(p_0+p_1)$), and
it follows that
\be
\EE\big[|\cE_1^{(1)}| \left| \, A'_k \big]\right.
=(d-k)\frac{p_1}{p_0+p_1}\leq \frac{p_1d}{p_0+p_1}
\ee
which is trivially bounded by $2k=2k a'_1$.  Next,
we have $\EE\big[|\cE_1^{(2)}| \left| \, A'_1 \big]\right.=0$, whereas for
$k\geq2$ the number of 2's in generation $\ell=1$ has distribution 
Bin($k,p_2/p_{\geq2}$), so that 
\be
\EE\big[|\cE_1^{(2)}| \left| \, A'_k \big]\right.
=k\frac{p_2}{p_{\geq2}}\leq k\leq 2k a'_1.
\ee

For $2\leq\ell\leq m-1$ we argue as follows.  We consider the subtree of
$\check T$ formed by edges with $\geq3$ links;
the number of 1's (respectively, 2's) 
in generation $\ell$ of $\check T$ equals
the size of generation $\ell-1$ of the subtree times an independent
Bin($d,p_1$) (respectively,  Bin($d,p_2$))
 random  variable.  Each edge with $\geq3$ links from $\rho$
is the root of a Galton--Watson tree of ($\geq3$):s;  these
Galton--Watson trees have offspring distribution Bin($d,q$), and hence
on average $(dq)^r$ descendants after $r$ steps.
For $k=1$ we get simply
\be
\EE\big[|\cE_\ell^{(j)}| \left| \, A'_1 \big]\right.
= (dp_j)(dq)^{\ell-2}\leq 2a'_\ell.
\ee
For $k\geq2$ there are Bin($k,p_{\geq3}/p_{\geq2}$) Galton--Watson
trees to consider, hence
\be
\EE\big[|\cE_\ell^{(j)}| \left| \, A'_k \big]\right.
= (k\tfrac{p_{\geq3}}{p_{\geq2}})(dp_j)(dq)^{\ell-2}\leq 2ka'_\ell.
\ee
For $\ell=m$ a similar argument gives
\be
\EE\big[|\cE_m| \left| \, A'_1 \big]\right.=
((1-p_0)d) (dq)^{m-2} \mbox{ and }
\EE\big[|\cE_m| \left| \, A'_k \big]\right.=
(k\tfrac{p_{\geq3}}{p_{\geq2}}) ((1-p_0)d) (dq)^{m-2}.
\ee
\end{proof}

\begin{proof}[Proof of Prop.~\ref{small-prop}]
As mentioned, it is enough to establish~\eqref{small-ineq-2}.  
Using Lemmas~\ref{PAs-lem} and~\ref{EAs-lem} as well as the inequalities
$a'_\ell\leq a''_\ell$ and $\s_{m-1}\leq\s_{m-2}\leq\s_{m-3}$, we see
that
\be
\begin{split}
\sum_{\ell=1}^m \s_{m-\ell} 
\EE\big[\one_{ (A_1\cup A_2)^c}|\cE_\ell|\big]&=
\sum_{\ell=1}^m \s_{m-\ell} \sum_{k=1}^d \big(
\EE\big[\one_{ A'_k}|\cE_\ell|\big]+
\EE\big[\one_{ A''_k}|\cE_\ell|\big] \big)\\
&\leq 4 \sum_{\ell=1}^m \s_{m-\ell} (a'_\ell+a''_\ell)
\sum_{k=1}^d k\big(\PP(A'_k)+\PP(A''_k)\big)\\
&\leq 16 \sum_{\ell=1}^m \s_{m-\ell} a''_\ell
\sum_{k=1}^d \big(\PP(A'_k)+\PP(A''_k)\big)\\
&=16\Big(\s_{m-1}+\s_{m-2}+
  \sum_{\ell=3}^m \s_{m-\ell}(dq)^{\ell-3}\Big) 
  \PP((A_1\cup A_2)^c)\\
&\leq 48   \PP((A_1\cup A_2)^c) \check\s_{m-1},
\end{split}
\ee
for $d$ large enough, as required.
\end{proof}

\section{Sharpness of the transition}
\label{sec sharp}

The arguments of Hammond~\cite{Ham2} can straightforwardly be adapted
to our setting.  We thus obtain the following `sharpness' result,
which shows that (in the interval $\b\in[d^{-1},d^{-1}+2d^{-2}]$)
there is a unique $\b_\crit$ such that
$\s(\beta)=\PP((\rho,0)\lra\oo)$ satisfies $\s=0$ for
$\b<\b_\crit$ and $\s>0$ for $\b>\b_\crit$:

\begin{proposition}\label{sharp-prop}
For $d$ large enough, the function $\b\mapsto\s(\b)$ is non-decreasing
on the interval $\b\in[d^{-1},d^{-1}+2d^{-2}]$.
\end{proposition}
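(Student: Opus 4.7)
The plan is to adapt Hammond's monotonicity argument from \cite{Ham2} to the case $u\in[0,1]$. Since $\sigma(\beta)=\lim_{m\to\infty}\sigma_m(\beta)$, it suffices to establish monotonicity of each $\sigma_m(\beta)$ on $[d^{-1},d^{-1}+2d^{-2}]$ uniformly in $m$ and then pass to the limit. The natural approach is to differentiate $\sigma_m$ with respect to $\beta$, using the standard coupling in which the Poisson process at intensity $\beta'>\beta$ is obtained by superposing onto the process at intensity $\beta$ an independent Poisson process of intensity $\beta'-\beta$ of crosses (rate $u$) and double bars (rate $1-u$). A Russo/Mecke-type identity for Poisson point processes then yields
\[
\frac{d}{d\beta}\sigma_m(\beta)=\sum_{e\in E(T^{(m)})}\bbE_\beta[D_e],
\]
where $D_e$ is the change in $\bbone\{(\rho,0)\lra\text{gen}(m)\}$ caused by adding a single link of random type and uniformly random time to edge $e$. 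The goal reduces to showing the right-hand side is non-negative.

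I would decompose contributions by the current link count on $e$. If $e$ is empty, then by Proposition~\ref{monolink-prop} the new link is a monolink, so it merges the loops of its two endpoints and can only help $(\rho,0)$ reach generation $m$; this term is non-negative. If $e$ already carries one or more links, $D_e$ may be negative, since an additional link can split a pre-existing loop. However, the probability that $e$ carries $\geq2$ links is of order $d^{-2}$, so the potentially negative contribution is subleading. For $\beta$ in the stated perturbative range and $d$ large, the positive ``empty-edge'' contribution dominates. The bounds needed are of the same flavor as those in Lemmas~\ref{PAs-lem} and~\ref{EAs-lem} and can be read off from the analysis in the proof of Proposition~\ref{small-prop}.

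The principal obstacle is that Hammond's proof for $u=1$ uses a comparison with a subcritical random walk on $T$, which breaks down for $u<1$ due to the time-reversal induced by double bars noted in the introduction. The workaround is to replace that comparison by a direct combinatorial bookkeeping of loop merges and splits, exploiting that the effect on connectivity of an additional link on a given edge depends only on the time-coordinates of the links present on that edge, not on whether they are crosses or double bars, when at most two of them are present; the $u$-dependence then enters only through the Poisson intensities $u\beta$ and $(1-u)\beta$, which merely multiply the constants appearing in the estimates. With this modification, the remainder of Hammond's argument goes through essentially unchanged, yielding the required monotonicity of $\sigma_m(\beta)$, and hence of $\sigma(\beta)$, on the interval $[d^{-1},d^{-1}+2d^{-2}]$ for $d$ large enough.
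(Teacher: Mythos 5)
Your starting point (differentiating $\s_m$ in $\b$ via a Russo/Mecke-type identity and showing the positive contributions dominate) is indeed the same as the paper's, which follows Hammond's~\cite[Lemma~1.7]{Ham2}. But the remainder of your argument contains several genuine gaps.

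First, the estimate justifying that the negative part is ``subleading'' is incorrect. For adding $\fa$ on edge $e$ to possibly split a loop, you need $e$ to carry \emph{at least one} link of $\om$ (so that after adding $\fa$ there are $\geq2$), and this has probability of order $d^{-1}$, not $d^{-2}$ as you claim. More importantly, even with the correct order $d^{-1}$, there is no obvious domination: both the positive contribution (empty edge, $\fa$ on the frontier of the loop and the far endpoint connecting onward) and the negative one are small, and comparing their orders requires the delicate bottleneck argument of Hammond. Specifically, the paper decomposes $P^\pm$ over the \emph{bottleneck event} $B$ and the \emph{no-escape event} $N$, and the key point of~\cite[Lemma~4.5]{Ham2} is that on $B^c$ the added link must lie in the connected cluster $\cM$ of edges with $\geq2$ links together with its shell $\cS$, which is a subcritical Galton--Watson object of $O(1)$ expected size. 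It is this confinement of $\fa$ to a region of constant expected size that yields $\PP(P^-\cap C\cap B^c)\leq c\,\s_{n-1}/|\cE_n|$, to be beaten by $\PP(P^+\cap C\cap B^c)\geq (e^{-\b})^d d\,\s_{n-1}/|\cE_n|$. Your sketch does not contain any analogue of this mechanism, and without it the claim that the positive part dominates is unsupported.

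Second, the claim that ``the effect on connectivity of an additional link depends only on the time-coordinates, not on whether they are crosses or double bars, when at most two are present'' is false. With two links on an edge, the resulting local connectivity is one of three distinct patterns depending on whether they are two crosses, two double bars, or one of each (see Figure~\ref{fig two-connection}); this is precisely the source of the $u$-dependence in $\bar\a(u)$.

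Third, you have misidentified the obstacle for $u<1$. The random-walk comparison that breaks down under time-reversal is used in Hammond's \emph{other} paper~\cite{Ham1} (to prove occurrence of infinite loops for $\a$ bounded away from $0$). The sharpness argument of~\cite{Ham2}, which is what Proposition~\ref{sharp-prop} adapts, is topological (merges and splits of loops) and, as the paper notes, applies to general $u\in[0,1]$ almost verbatim; the workaround you describe is neither needed nor correct.
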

\begin{proof}[Sketch proof]
Hammond's arguments~\cite{Ham2} are written for the case $u=1$ when
there are only crosses, but they apply (almost verbatim) to the
general case $u\in[0,1]$.  We provide here a synopsis of the proof,
for the reader's benefit.

The starting point is a formula for the derivative
$\frac{d\s_n}{d\b}$, involving the concept of `the added link' (called
the added \emph{bar} by Hammond).  In addition to the Poisson process
$\om$ of links (i.e.\ crosses and double-bars), let $\fa$ be an
independently and uniformly placed link in $T^{(n)}$, which is a cross
with probability $u$ and otherwise a double-bar.  Let $P^+$ and $P^-$
denote the following \emph{pivotality events}:
\be
P^+=\{(\rho,0)\overset{\om}{\not\lra}n,
(\rho,0)\overset{\om\cup\{\fa\}}{\lra}n\},
\quad
P^-=\{(\rho,0)\overset{\om}{\lra}n,
(\rho,0)\overset{\om\cup\{\fa\}}{\not\lra}n\}.
\ee
In words, $P^+$ is the event that $\fa$ creates a connection to level
$n$ that was not present in $\om$, and $P^-$ is the event that $\fa$
breaks a connection to level $n$.  We say that $\fa$ is on-pivotal if
$P^+$ happens and off-pivotal if $P^-$ happens.  Then we
have~\cite[Lemma~1.7]{Ham2}: 
\be\label{diff-piv}
\frac{d\s_n}{d\b}=|\cE_n|\big(\PP(P^+)-\PP(P^-)\big).
\ee
Here $\cE_n$ denotes the set of edges of $T^{(n)}$.

\begin{centering}
\bfig
\begin{picture}(0,0)%
\includegraphics{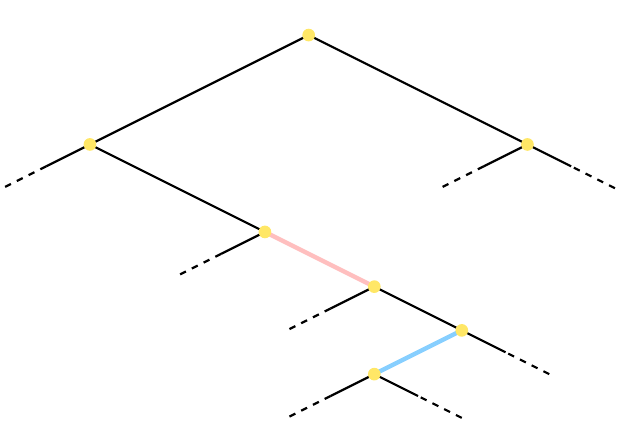}%
\end{picture}%
\setlength{\unitlength}{1381sp}%
\begingroup\makeatletter\ifx\SetFigFont\undefined%
\gdef\SetFigFont#1#2#3#4#5{%
  \reset@font\fontsize{#1}{#2pt}%
  \fontfamily{#3}\fontseries{#4}\fontshape{#5}%
  \selectfont}%
\fi\endgroup%
\begin{picture}(8466,5760)(1768,-5344)
\put(7051,-4336){\makebox(0,0)[lb]{\smash{{\SetFigFont{8}{9.6}{\rmdefault}{\mddefault}{\updefault}{\color[rgb]{0,0,0}${\mathfrak a}$}%
}}}}
\put(5851,164){\makebox(0,0)[lb]{\smash{{\SetFigFont{8}{9.6}{\rmdefault}{\mddefault}{\updefault}{\color[rgb]{0,0,0}$\rho$}%
}}}}
\put(6226,-2986){\makebox(0,0)[lb]{\smash{{\SetFigFont{8}{9.6}{\rmdefault}{\mddefault}{\updefault}{\color[rgb]{0,0,0}${\mathfrak b}$}%
}}}}
\end{picture}%
\caption{Illustration for the random link $\mathfrak a$ and the bottleneck-link $\mathfrak b$.}
\efig
\end{centering}

Hammond shows that the difference on the right-hand-side
of~\eqref{diff-piv} is positive on the interval in $\b$ considered
(when $d$ is large enough).  The result then follows by letting
$n\to\oo$.  To show that $\PP(P^+)-\PP(P^-)\geq0$, Hammond introduces
the following events.  Firstly, the \emph{crossing-event} $C$ that
the loop $\cL_{(\rho,0)}(\om)$ of $(\rho,0)$ in $\om$
visits an end-point of the added link $\fa$ before reaching level
$n$.  Note that $P^\pm\se C$, since if $C$ does not happen then the
added link has no effect on whether or not $\cL_{(\rho,0)}$ reaches
level $n$.  Secondly, the \emph{bottleneck-event} $B$ that some edge
of $T^{(n)}$ on the (unique) path from $\rho$ to $\fa$ supports only
one link.  On the event $B$, let the bottleneck-link $\fb$ be the
furthest such link from $\rho$.  And thirdly, the
\emph{no-escape-event} $N\se B$ that the loop
$\cL_{(\rho,0)}(\om\sm\fb)$ of $(\rho,0)$ in $\om\sm\fb$ does not
reach level $n$.

Note that $P^\pm$ can be written as a disjoint union
\be
P^\pm=(P^\pm\cap C\cap B^c)\cup
(P^\pm\cap C\cap B\cap N).
\ee
Indeed, one only needs to check that $C \cap B \se N$, that is, if $C$ happens and there is a
bottle-neck, then the no-escape-event happens.  But if $C$ happens
and $\fb$ is a bottle-neck, then in $\om\sm\fb$ the loop
$\cL_{(\rho,0)}$ cannot reach level $n$, because if it did then it
would reach level $n$ in both $\om$ and $\om\cup\fa$ also, since $\fb$
is a monolink (Proposition~\ref{monolink-prop}).  

Hence it suffices to
provide lower bounds on the differences
\be
\begin{split}
\d_1&=\PP(P^+\cap C\cap B^c)-\PP(P^-\cap C\cap B^c),\\
\d_2&=\PP(P^+\cap C\cap B\cap N)-\PP(P^-\cap C\cap B\cap N).
\end{split}
\ee
It is easy to give a lower bound on the first term in $\d_1$.  Indeed,
suppose the following happen: (i) in $\om$ there is no link adjacent
to $\rho$, (ii) $\fa$ is adjacent to $\rho$, (iii) the other endpoint
of $\fa$ is connected by a loop to level $n$.  Then $P^+\cap C\cap B^c$
happens.  It follows that
\be\label{Pplus-lb}
\PP(P^+\cap C\cap B^c)\geq (e^{-\b})^d \frac{d}{|\cE_n|}\s_{n-1}.
\ee
It turns out that the second term in $\d_1$ satisfies
\be\label{Pminus-ub}
\PP(P^-\cap C\cap B^c)\leq c \frac{\s_{n-1}}{|\cE_n|},
\ee
for some constant $c$ independent of $d$.  The detailed argument for
this is more involved, see~\cite[Lemma~4.5]{Ham2}, but no changes are
required compared to Hammond's original argument.  Very briefly, the
reason that one gets a constant factor $c$ rather than a factor which
grows with $d$ as in~\eqref{Pplus-lb} is as follows.  If $P^-$
happens, then necessarily the edge supporting $\fa$ also supports some
link of $\om$:  if it did not then adding $\fa$ would necessarily
merge two loops, thereby preserving any connections to level $n$.  If
also $B^c$ happens, i.e.\ there is no bottleneck, then necessarily
$\fa\in\cM\cup\cS$ where $\cM$ is the connected cluster of $\rho$
consisting of edges which support $\geq2$ links in $\om$, and $\cS$ is
the set of edges that are  adjacent to an edge of $\cM$ and support
exactly one link in $\om$.  Now $\cM$ is a very sub-critical
Galton--Watson tree, and is therefore of at most constant (expected)
size, and $\cS$ is an approximately constant (Bin($d,\b e^{-\b}$))
multiple of the number of leaves of $\cM$, and is thus also small.
Hence there is an approximately constant number of locations for $\fa$
which are consistent with the event $P^-\cap C\cap B^c$, giving the
factor $c/|\cE_n|$.  The factor
$\s_{n-1}$ appears in~\eqref{Pminus-ub} since some link of $\cS$ is
connected to level $n$.  

Putting together~\eqref{Pplus-lb} and~\eqref{Pminus-ub} we obtain
that, for $d$ large enough,  
\be\label{d1-lb}
\d_1(n)\geq \tfrac{d}{2} (e^{-\b})^d \frac{\s_{n-1}}{|\cE_n|}.
\ee
Now consider the other term $\d_2(n)$, where the bottleneck- and
no-escape-events $B$ and $N$ happen.  Since $N$ happens, any
connections to level $n$ must occur in the subtree rooted at the
bottleneck edge $\fb$, which is some (random) distance $n'\leq n$ from
level $n$.  Since $\fb$ was defined as the \emph{furthest} bottleneck
from $\rho$, there is no bottleneck in this subtree.
We thus essentially have that $\d_2(n)=\d_1(n')$, so we
can use the bounds on $\d_1$ that were already established.  The only
$n$-dependence in those bounds was in the factors $\s_{n-1}/|\cE_n|$.
It follows that for large enough $d$ we certainly have $\d_2(n)\geq0$.
Together with~\eqref{d1-lb} and~\eqref{diff-piv} this gives
\be
\frac{d\s_{n-1}}{d\b}\geq  \tfrac{d}{2} (e^{-\b})^d \s_{n-1}\geq 0,
\ee
which as explained gives the result.
\end{proof}

\smallskip
\noindent
{\bf Acknowledgments:}
We are grateful to Alan Hammond for suggesting
the present study, and for several useful comments.

This work was mainly carried out while JB was at the
University of Copenhagen (KU), in particular during
a visit of DU at that time.  It is a pleasure to thank J.\ P.\ Solovej and the KU for kind 
hospitality.

The research of JB is supported by Vetenskapsr{\aa}det grant 2015-05195.
We also thank The Leverhulme Trust for its support through the International Network `Laplacians, Random Walks, Quantum Spin Systems'.

{
\renewcommand{\refname}{\small References}
\bibliographystyle{symposium}

}

\end{document}